\documentclass[12pt]{amsart}
\usepackage[utf8]{inputenc}
\usepackage{amsfonts}
\usepackage{amssymb}
\usepackage{amsmath}
\usepackage[english]{babel}
\usepackage{amsthm}
\theoremstyle{plain}
\newtheorem{theorem}{Theorem}[section]
\newtheorem{corollary}[theorem]{Corollary}
\newtheorem{lemma}[theorem]{Lemma}

\theoremstyle{definition}
\newtheorem{definition}[theorem]{Definition}
\newtheorem{remark}[theorem]{Remark}
\usepackage{geometry}
\numberwithin{equation}{section} 
\usepackage{tikz}
\usepackage{caption}
\usepackage{graphicx}
\usepackage[titletoc,toc,title]{appendix}
\usepackage{lipsum}

\newcommand\blfootnote[1]{%
  \begingroup
  \renewcommand\thefootnote{}\footnote{#1}%
  \addtocounter{footnote}{-1}%
  \endgroup
}

\usepackage[colorlinks, citecolor=blue,pagebackref,hypertexnames=false]{hyperref}

\newcounter{comcount}

\begin{document}
\title[Isoperimetric Inequality on Grushin space]{Isoperimetric Inequality for degenerate elliptic operators of Grushin type} 
\date{}
\author{Dangyang He}  
\address{Department of Mathematics, Sun Yat-sen University, Guangzhou 510275, Guangdong, P.R. China}
\email{hedy28@mail.sysu.edu.cn}

\begin{abstract}
Let \(n,m\ge 1\), \(\alpha\in(0,1)\), and \(\beta\ge 0\). For the Grushin-type operator
\[
L=-\nabla_x\!\cdot\!\bigl(|x|^{2\alpha}\nabla_x\bigr)+|x|^{2\beta}\Delta_y
\qquad \text{on } \mathbb R^n\times \mathbb R^m,
\]
we prove the isoperimetric inequality on the associated Grushin space. Equivalently, if
\[
Q=\frac{n+m(\beta+1-\alpha)}{1-\alpha},
\]
then
\[
|\Omega|^{\frac{Q-1}{Q}}\le C\,P(\Omega)
\]
for every smooth bounded domain \(\Omega\subset \mathbb R^{n+m}\).
\end{abstract}

\maketitle

\tableofcontents

\blfootnote{$\textit{2020 Mathematics Subject Classification.}$ Primary 35J70; Secondary 53C17.}

\blfootnote{$\textit{Keywords and Phrases.}$ Isoperimetric inequality, Sobolev inequality, degenerate operator.}

\section{Introduction}

Let \(n,m\ge 1\), \(\alpha\in(0,1)\), and \(\beta\ge 0\). Consider the degenerate elliptic operator
\begin{equation}\label{eq:L}
L=-\nabla_x\!\cdot\!\bigl(|x|^{2\alpha}\nabla_x\bigr)+|x|^{2\beta}\Delta_y
\qquad \text{on } \mathbb R^n\times \mathbb R^m,
\end{equation}
where \(\Delta_y\) denotes the standard Laplacian in the \(y\)-variables. We refer to \cite{RS} for the precise definition and basic properties of \(L\). Associated with \(L\) is the first-order differential operator
\[
\nabla_L:=\bigl(|x|^\alpha \nabla_x,\; |x|^\beta \nabla_y\bigr),
\]
and the corresponding control distance \(d\). We write \(B(\xi,r)\) for the \(d\)-ball centered at \(\xi\) with radius \(r\), and \(P(\Omega)\) for the perimeter of a smooth set \(\Omega\subset \mathbb R^{n+m}\) with respect to this geometry.


The purpose of this paper is to establish the isoperimetric inequality on Grushin spaces. More precisely, we prove that for every smooth bounded domain \(\Omega\subset \mathbb{R}^{n+m}\),
\begin{equation}\label{ISO_Q}
    |\Omega|^{\frac{\mathcal{Q}-1}{\mathcal{Q}}} \le C_1 P(\Omega),
\end{equation}
where \(|\cdot|\) denotes the Lebesgue measure on \(\mathbb{R}^{n+m}\), \(P(\Omega)\) denotes the perimeter of \(\Omega\), and
\begin{equation}\label{Q}
    \mathcal{Q} = \frac{n+m(\beta+1-\alpha)}{1-\alpha}
\end{equation}
is the homogeneous dimension associated with \(L\). By the co-area formula, \eqref{ISO_Q} is equivalent to the Sobolev inequality
\begin{equation}\label{S_p}\tag{$\mathrm{S}_{\mathcal{Q}}^p$}
\left(\int_{\mathbb{R}^{n+m}} |f(\xi)|^{\frac{\mathcal{Q}p}{\mathcal{Q}-p}} \, d\xi \right)^{\frac{\mathcal{Q}-p}{\mathcal{Q}}}
\le
C_p \int_{\mathbb{R}^{n+m}} |\nabla_L f(\xi)|^p \, d\xi,
\qquad \forall f\in C_c^\infty(\mathbb{R}^{n+m}),
\end{equation}
in the case \(p=1\).






We now state the main result of this paper.

\begin{theorem}\label{thm_ISO}
Suppose that \(n,m\ge 1\), \(\alpha\in(0,1)\), and \(\beta\ge 0\). Then the isoperimetric inequality \eqref{ISO_Q}, or equivalently the Sobolev inequality \eqref{S_p} with \(p=1\), holds on \(\mathbb{R}^{n+m}\).
\end{theorem}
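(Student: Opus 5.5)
The plan is to prove the Sobolev inequality \eqref{S_p} with \(p=1\) by the classical route: a pointwise representation of \(f\) by a fractional integral of \(|\nabla_L f|\), followed by a weak-type estimate for that integral. The weak-type bound is then upgraded to the strong \(L^1\) inequality by the truncation argument of Maz'ya (equivalently Federer--Fleming).

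\textbf{Geometry.} The first step is to record the geometric facts for the control distance \(d\). Write \(\delta_\lambda(x,y)=(\lambda^{1/(1-\alpha)}x,\lambda^{(\beta+1-\alpha)/(1-\alpha)}y)\). A direct computation shows \(\nabla_L(f\circ\delta_\lambda)=\lambda\,(\nabla_L f)\circ\delta_\lambda\), hence \(d(\delta_\lambda\xi,\delta_\lambda\eta)=\lambda d(\xi,\eta)\). The Jacobian of \(\delta_\lambda\) is \(\lambda^{\mathcal Q}\). From this and the Franchi--Lanconelli type box description of balls,
\[
|B((x,y),r)|\simeq r^{n}\bigl(r+|x|^{1-\alpha}\bigr)^{\frac{n\alpha}{1-\alpha}}\, r^{m}\bigl(r+|x|^{1-\alpha}\bigr)^{\frac{m(\beta-\alpha)}{1-\alpha}}\cdot(\text{normalisation}),
\]
which I would check reduces to \(|B(\xi,r)|\simeq r^{\mathcal Q}\) when \(|x|^{1-\alpha}\lesssim r\), and to the Euclidean-weighted size \(r^{n+m}|x|^{-n\alpha-m(\beta-\alpha)}\cdots\) otherwise. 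The key consequence I want is the \emph{lower bound} \(|B(\xi,r)|\gtrsim r^{\mathcal Q}\) for all \(\xi\) and \(r\). The exponents should be increasing in the ratio \(|x|^{1-\alpha}/r\) when \(\beta\ge\alpha\). The case \(\beta<\alpha\) needs care and may force using the reverse comparison only on the \(y\) part. Together with doubling, this gives the measure hypothesis.

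\textbf{Representation and weak type.} Next I prove a local \(1\)-Poincaré inequality
\[
\int_B |f-f_B|\le C r\int_{\lambda B}|\nabla_L f|.
\]
For vector fields with locally Lipschitz, non-smooth coefficients \(|x|^\alpha\), Hörmander's theory does not apply. Instead I would use Jerison-style path arguments: join points by controlled paths built from \(x\)-moves and \(y\)-moves, then average. This is the step I expect to be the \textbf{main obstacle}, because it requires uniform control near the degeneracy set \(\{x=0\}\). The first thing I would try is to reduce by dilation invariance to \(r=1\) and split into two regimes. Balls far from \(\{x=0\}\) are comparable to Euclidean boxes with bounded weight, so the Euclidean Poincaré applies after rescaling. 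Balls meeting \(\{x=0\}\) lie in a fixed compact region, where I would construct explicit curves going radially in \(x\) to \(|x|\sim1\), moving in \(y\) there, and returning. Poincaré plus doubling then yields, via the standard chaining (Hajłasz--Koskela / Franchi--Lu--Wheeden), the representation
\[
|f(\xi)|\le C\int \frac{d(\xi,\eta)}{|B(\xi,d(\xi,\eta))|}|\nabla_L f(\eta)|\,d\eta\le C\int\frac{|\nabla_L f(\eta)|}{d(\xi,\eta)^{\mathcal Q-1}}\,d\eta,
\]
where the last inequality uses the lower volume bound. The kernel \(d(\xi,\cdot)^{1-\mathcal Q}\) lies in weak \(L^{\mathcal Q/(\mathcal Q-1)}\) uniformly in \(\xi\), since \(|\{\eta:d(\xi,\eta)<r\}|\gtrsim r^{\mathcal Q}\) gives the level-set bound in the right direction. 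Minkowski's inequality in the weak space (valid since \(\mathcal Q/(\mathcal Q-1)>1\)) then gives
\[
|\{|f|>t\}|^{(\mathcal Q-1)/\mathcal Q}\le C t^{-1}\|\nabla_L f\|_1.
\]

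\textbf{Truncation.} Finally, I apply the weak bound to the truncations \(f_k=\min((|f|-2^k)_+,2^k)\) and sum over \(k\), which gives the strong inequality for \(p=1\). The coarea formula then converts it to \eqref{ISO_Q}.
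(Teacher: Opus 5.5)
Your route (a $1$-Poincaré inequality plus doubling plus the lower bound $V(\xi,r)\gtrsim r^{\mathcal Q}$, giving a weak Sobolev bound, then Maz'ya truncation) differs from the paper's, which never uses a Poincaré inequality. The paper proves $\Gamma_2(f)\ge -C|x|^{-2(1-\alpha)}\Gamma(f)+C'(Lf)^2$. A local Li--Yau estimate on $x$-remote balls turns this into a heat-kernel gradient bound with the extra factor $t^{-1/2}+|x|^{\alpha-1}$. The paper then splits $f$ by a partition of unity into $x$-remote pieces, handled through $f_j-e^{-tL}f_j$ and ultracontractivity, and a piece over $B_n(0,R)\times\mathbb R^m$. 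The cut-off errors and the near-axis piece are absorbed by the Hardy inequality $\int|u|\,|x|^{\alpha-1}\le C_H\int|\nabla_L u|$. That inequality is proved by integrating radially \emph{outward} in $x$, so it never crosses $\{x=0\}$.

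\textbf{Main gap.} The $1$-Poincaré inequality on all balls, which carries your whole argument, is false when $n=1$, and $n=1$ is inside the theorem's scope. Take $f(x,y)=g_\varepsilon(x)$ with $g_\varepsilon=0$ for $x\le-\varepsilon$, $g_\varepsilon=1$ for $x\ge\varepsilon$, $|g_\varepsilon'|\le 2/\varepsilon$. Take $B=B((0,y_0),1)$, which contains the box $(-c_3,c_3)\times B_m(y_0,c_3)$ by Lemma~\ref{le_volume}. Then $\int_B|f-f_B|$ stays bounded below, while $\int_{\lambda B}|\nabla_L f|\lesssim\varepsilon^{-1}\int_{-\varepsilon}^{\varepsilon}|x|^\alpha\,dx\sim\varepsilon^{\alpha}\to0$.

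The hyperplane $\{x=0\}$ carries zero $\nabla_L$-perimeter. So no family of curves crossing it can give $L^1$ control: converting $\partial_x$ into $|x|^\alpha\partial_x$ costs the unbounded weight $|x|^{-\alpha}$. For $n\ge2$ your radial curves do work. Outward radial paths have density $\lesssim s$ at radius $s$, so the weight is $s^{1-\alpha}$; this is the same computation as in Lemma~\ref{le_hardy}. For $n=1$ you could repair the argument by running it separately on the closed half-spaces $\{\pm x\ge 0\}$. Each is doubling, satisfies a $1$-Poincaré inequality for balls intersected with it, and has the same lower volume bound. As written, though, the proposal does not cover $n=1$.

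\textbf{Second gap.} The weak-type step is also wrong as stated. The pointwise inequality $d/|B(\xi,d)|\lesssim d^{1-\mathcal Q}$ is true, but the new kernel is not in weak $L^{\mathcal Q/(\mathcal Q-1)}$. Its level sets are balls $B(\eta,\rho)$ with $\rho=s^{-1/(\mathcal Q-1)}$. Bounding $s\,|B(\eta,\rho)|^{(\mathcal Q-1)/\mathcal Q}$ from above needs the \emph{upper} bound $|B(\eta,\rho)|\lesssim\rho^{\mathcal Q}$. That fails for $\rho\le|x'|^{1-\alpha}$, where $|B(\eta,\rho)|\sim\rho^{n+m}|x'|^{n\alpha+m\beta}\gg\rho^{\mathcal Q}$. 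Near any point off the axis the space is locally $(n+m)$-dimensional with $n+m<\mathcal Q$. Hence $d(\cdot,\eta)^{1-\mathcal Q}$ is not even locally integrable once $\mathcal Q\ge n+m+1$.

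You must keep the kernel $d(\xi,\eta)/|B(\xi,d(\xi,\eta))|$. Since $\rho/|B(\eta,\rho)|$ is essentially decreasing, its level sets lie in balls $B(\eta,\rho_s)$ with $\rho_s/|B(\eta,\rho_s)|\approx s$. Then $s\,|B(\eta,\rho_s)|^{(\mathcal Q-1)/\mathcal Q}\approx\rho_s|B(\eta,\rho_s)|^{-1/\mathcal Q}\lesssim 1$, and this is exactly where the lower volume bound enters.

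\textbf{Minor point.} Your volume formula is off: the $y$-exponent should be $m\beta/(1-\alpha)$, not $m(\beta-\alpha)/(1-\alpha)$. By Lemma~\ref{le_geo}, for $r\le|x|^{1-\alpha}$ one has $V(\xi,r)\sim r^{n+m}(|x|^{1-\alpha})^{\mathcal Q-n-m}\ge r^{\mathcal Q}$, because $\mathcal Q-n-m=(n\alpha+m\beta)/(1-\alpha)\ge 0$. So the lower bound holds for every $\beta\ge0$, and there is no separate case $\beta<\alpha$.
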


Since \(\nabla_L\) is local, the family \eqref{S_p} enjoys a self-improvement property:
\[
(\mathrm{S}_{\mathcal Q}^p)\implies (\mathrm{S}_{\mathcal Q}^q),
\qquad 1\le p\le q<\mathcal Q.
\]
As a consequence, we obtain the following corollary.

\begin{corollary}\label{thm_S_p}
Suppose that \(n,m\ge 1\), \(\alpha\in(0,1)\), and \(\beta\ge 0\). Then the Sobolev inequality \((\mathrm{S}_{\mathcal Q}^p)\) holds on \(\mathbb{R}^{n+m}\) for all \(1\le p<\mathcal Q\).
\end{corollary}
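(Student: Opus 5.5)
The statement is a corollary of Theorem~\ref{thm_ISO} combined with the self-improvement property recorded just before it. So the plan is: take \((\mathrm{S}_{\mathcal Q}^1)\) from Theorem~\ref{thm_ISO}, and for a given \(1<q<\mathcal Q\) derive \((\mathrm{S}_{\mathcal Q}^q)\) by applying the \(p=1\) inequality to a suitable power of \(|f|\). This is the classical Gagliardo--Nirenberg trick. It needs only the Leibniz/chain rule for \(\nabla_L\), which holds because \(\nabla_L\) is a first-order operator with coefficients \(|x|^\alpha,|x|^\beta\), so it acts locally as a derivation.

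Fix \(f\in C_c^\infty(\mathbb R^{n+m})\) and \(1<q<\mathcal Q\). Set \(q^*=\mathcal Q q/(\mathcal Q-q)\) and \(\gamma=q(\mathcal Q-1)/(\mathcal Q-q)>1\), and put \(g=|f|^\gamma\).

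\textbf{Step 1: admissibility of \(g\).} Since \(\gamma>1\), \(g\) is \(C^1\) with compact support, and
\[
\nabla_L g=\gamma|f|^{\gamma-1}\operatorname{sgn}(f)\nabla_L f .
\]
The inequality \((\mathrm{S}_{\mathcal Q}^1)\) extends from \(C_c^\infty\) to such \(g\) by mollification. Indeed, \(\nabla_L g_\varepsilon\to\nabla_L g\) in \(L^1\) because the weights \(|x|^\alpha,|x|^\beta\) are locally bounded and continuous. On the left-hand side we pass to the limit using Fatou's lemma.

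\textbf{Step 2: apply \((\mathrm{S}_{\mathcal Q}^1)\) to \(g\).} Note that \(\gamma\mathcal Q/(\mathcal Q-1)=q^*\). Then, with \(q'\) the conjugate exponent,
\[
\Bigl(\int |f|^{q^*}\Bigr)^{\frac{\mathcal Q-1}{\mathcal Q}}\le C_1\gamma\int |f|^{\gamma-1}|\nabla_L f|
\le C_1\gamma\Bigl(\int|f|^{(\gamma-1)q'}\Bigr)^{1/q'}\Bigl(\int|\nabla_L f|^q\Bigr)^{1/q}.
\]
The second inequality is Hölder's inequality.

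\textbf{Step 3: close the estimate.} One checks that \((\gamma-1)q'=q^*\). Divide both sides by \((\int|f|^{q^*})^{1/q'}\), which is finite and may be assumed nonzero. The remaining exponent is
\[
\frac{\mathcal Q-1}{\mathcal Q}-\frac{q-1}{q}=\frac1q-\frac1{\mathcal Q}=\frac{\mathcal Q-q}{\mathcal Q q}.
\]
This gives \(\|f\|_{q^*}\le C_1\gamma\|\nabla_L f\|_q\). That is exactly \((\mathrm{S}_{\mathcal Q}^q)\) as written in the paper (with \(C_q\) absorbing the power \(q\)). The same argument run from a general \(p\) gives the stated implication \((\mathrm{S}_{\mathcal Q}^p)\Rightarrow(\mathrm{S}_{\mathcal Q}^q)\), though only \(p=1\) is needed here.

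\textbf{Main obstacle.} There is no genuine difficulty here, since all the geometric work sits in Theorem~\ref{thm_ISO}. The only point requiring care is the exponent bookkeeping in Steps 2--3, together with the approximation argument in Step 1, which justifies applying \((\mathrm{S}_{\mathcal Q}^1)\) to the merely \(C^1\) function \(|f|^\gamma\).
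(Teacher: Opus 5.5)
Your proposal is correct and follows the paper's route. The paper deduces the corollary from Theorem~\ref{thm_ISO} via the self-improvement $(\mathrm{S}_{\mathcal Q}^1)\Rightarrow(\mathrm{S}_{\mathcal Q}^q)$, which it attributes to the locality of $\nabla_L$ without further detail. Your substitution $g=|f|^\gamma$ with $\gamma=q(\mathcal Q-1)/(\mathcal Q-q)$, followed by H\"older, is exactly the standard argument behind that implication, and you handle the exponent bookkeeping and the $C^1_c$ approximation correctly.
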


In the Grushin plane, the isoperimetric inequality was established in \cite{MM}. In higher dimensions, Franceschi and Monti \cite{FM} proved it for \(n=1\), \(\alpha=0\), and \(m\ge 1\). Moreover, for \(n\ge 2\), they obtained the corresponding result for \(x\)-spherically symmetric sets. Finally, under the same assumptions as in Theorem~\ref{thm_ISO}, Hauer and Sikora \cite[Theorem 6.3]{HS} established the Sobolev inequality \((\mathrm{S}_{\mathcal Q}^p)\) for \(2\le p<\mathcal Q\).


Our argument is motivated by the following observation. Endow \(\mathbb{R}^{n+m}\) with the metric
\[
g=|x|^{-2\alpha}dx^2+|x|^{-2\beta}dy^2.
\]
Passing to polar coordinates in the \(x\)-variable and introducing $\rho=(1-\alpha)^{-1}r^{1-\alpha}$, one sees that \(g\) can be written as the doubly warped product metric
\[
g=d\rho^2+[(1-\alpha)\rho]^2\,d\theta^2+[(1-\alpha)\rho]^{-2\beta/(1-\alpha)}\,dy^2
\]
on
\[
(0,\infty)_\rho\times_{(1-\alpha)\rho}\mathbb{S}^{n-1}\times_{[(1-\alpha)\rho]^{-\beta/(1-\alpha)}}\mathbb{R}^m.
\]
By the standard curvature formula for doubly warped products,
\[
\textrm{Ric}_\xi \ge -\frac{C(n,m,\alpha,\beta)}{\rho^2}\,g_\xi
= -\frac{C(n,m,\alpha,\beta)}{|x|^{2-2\alpha}}\,g_\xi.
\]
Although our Grushin operator \(L\) is not the standard Laplace--Beltrami operator associated with \(g\), this computation suggests a corresponding curvature-dimension inequality for \(-L\). Combined with a remote ball argument from \cite{Gilles}, this yields a suitable gradient estimate for the heat semigroup. We then apply the method of \cite{He_ISO}, based on a weak-type Sobolev inequality and a Hardy-type inequality, to obtain the desired isoperimetric estimate.

\section{Preliminaries}

Throughout the paper, we write \(\xi=(x,y)\) and \(\eta=(x',y')\), where \(x,x'\in\mathbb R^n\) and \(y,y'\in\mathbb R^m\). We denote by \(B(\xi,r)\) the Grushin ball $B(\xi,r):=\{\eta\in\mathbb R^{n+m}: d(\xi,\eta)<r\}$, and by $V(\xi,r):=|B(\xi,r)|$ its volume with respect to the Lebesgue measure on \(\mathbb R^{n+m}\). We also write \(B_n\) and \(B_m\) for Euclidean balls in \(\mathbb R^n\) and \(\mathbb R^m\), respectively.

The following lemma summarizes some basic geometric properties of Grushin spaces; see \cite[Proposition~5.1, Theorem~6.4]{RS}.

\begin{lemma}\label{le_geo}
Let $n, m\ge 1$, $\alpha \in (0,1)$ and $\beta \ge 0$.

\begin{enumerate}
\item Distance estimate: there exists constant $C_d>0$ such that \begin{align}
   C_d^{-1} D(\xi,\eta) \le  d(\xi,\eta) \le C_d D(\xi,\eta),
\end{align}
where
\begin{equation*}
    D(\xi,\eta) = \frac{|x-x'|}{\left( |x| + |x'| \right)^\alpha} + \frac{|y-y'|}{(|x|+|x'|)^{\beta} + |y-y'|^{\frac{\beta}{\beta+1-\alpha}}}.
\end{equation*}
\item Volume estimate: \begin{align*}
    V(\xi,r) = V((x,y),r) \sim \begin{cases}
        r^{\mathcal{Q}}, & r\ge |x|^{1-\alpha},\\
        r^{n+m}|x|^{n \alpha + m\beta}, & r\le |x|^{1-\alpha}.
    \end{cases}
\end{align*}
In addition, there exists constant $C_D>0$ such that the volume doubling condition holds:
\begin{align*}
\frac{V(\xi, R)}{V(\xi,r)} \le C_D \left(\frac{R}{r}\right)^{\mathcal{Q}}
\end{align*}
for all $\xi \in \mathbb{R}^{n+m}$ and all $R\ge r>0$.
\item Gaussian upper bound for heat kernel: 
\begin{equation*}
    e^{-tL}(\xi,\eta) \le \frac{C}{V(\xi, \sqrt{t})} e^{-\frac{d(\xi,\eta)^2}{ct}}
\end{equation*}
for all $t>0$ and almost every $\xi,\eta \in \mathbb{R}^{n+m}$.
\end{enumerate}

\end{lemma}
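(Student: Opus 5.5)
The statement just above is Lemma~\ref{le_geo}, a compilation of known facts, so my plan is to derive each item from \cite{RS} and its standard consequences rather than from scratch. Throughout I use the scaling structure of $L$. Set $\delta_\lambda(x,y)=(\lambda x,\lambda^{\beta+1-\alpha}y)$. One checks that $L\circ\delta_\lambda=\lambda^{2-2\alpha}\,\delta_\lambda\circ L$ in the sense of pullbacks, so $d(\delta_\lambda\xi,\delta_\lambda\eta)=\lambda^{1-\alpha}d(\xi,\eta)$. The Jacobian of $\delta_\lambda$ is $\lambda^{n+m(\beta+1-\alpha)}$, and hence $V(\delta_\lambda\xi,\lambda^{1-\alpha}r)=\lambda^{(1-\alpha)\mathcal Q}V(\xi,r)$. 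This reduces every estimate to a normalized scale.

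\textbf{Distance estimate.} I compare $d$ with $D$ by splitting into two regimes according to whether $|y-y'|^{1/(\beta+1-\alpha)}\lesssim |x|+|x'|$.

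\emph{Upper bound.} First travel in $x$ along the straight segment, where the speed cost is $|x|^{-\alpha}$. Integrating gives a cost $\lesssim |x-x'|/(|x|+|x'|)^\alpha$ after possibly detouring away from the origin; the detour is needed since $|x|^{-\alpha}$ is integrable, so this remains comparable. Then move in $y$ at a height $|x|\sim \max(|x|+|x'|,\ |y-y'|^{1/(\beta+1-\alpha)})$. The $y$-cost is $|y-y'|/h^\beta$ plus $h^{1-\alpha}$ for reaching that height. Optimizing over $h$ gives the second term of $D$.

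\emph{Lower bound.} Along any admissible curve of length $\ell$, the $x$-component moves at $|\dot x|\le |x|^\alpha$. Hence $|x(t)|^{1-\alpha}$ changes by at most $\ell$, which bounds $|x|$ along the curve by $(|x|^{1-\alpha}+\ell)^{1/(1-\alpha)}$. From this, $|y-y'|\le \ell\,\sup|x|^\beta$, which yields $D\lesssim \ell$ after case analysis.

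\textbf{Volume estimate.} Using the distance estimate, $B(\xi,r)$ is comparable to a box. When $r\le |x|^{1-\alpha}$, the point $x'$ stays comparable to $x$, and the box is $\{|x-x'|\lesssim r|x|^\alpha,\ |y-y'|\lesssim r|x|^\beta\}$, of volume $r^{n+m}|x|^{n\alpha+m\beta}$. When $r\ge |x|^{1-\alpha}$, the ball is comparable to one centered at $(0,y)$, and scaling gives $r^{\mathcal Q}$.

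\textbf{Doubling.} Doubling with exponent $\mathcal Q$ follows by checking the three cases $R\le |x|^{1-\alpha}$, $r\ge |x|^{1-\alpha}$, and $r<|x|^{1-\alpha}<R$. Since $n+m\le \mathcal Q$, the ratio is bounded by $(R/r)^{\mathcal Q}$ in the mixed case too. The mixed case gives $R^{\mathcal Q}/(r^{n+m}|x|^{n\alpha+m\beta})$, and using $|x|^{n\alpha+m\beta}\ge r^{\mathcal Q-n-m}$ there closes it.

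\textbf{Gaussian bound.} The form $\int|\nabla_L f|^2$ is a strongly local regular Dirichlet form whose intrinsic distance is $d$. Given doubling, a scale-invariant $L^2$ Poincaré inequality on $d$-balls yields Gaussian bounds through the Grigor'yan--Saloff-Coste equivalence. So the remaining task, and the main obstacle, is Poincaré. I would prove it by the scaling reduction above. Away from $\{x=0\}$, on balls with $r\le |x|^{1-\alpha}$, the operator is uniformly elliptic with comparable coefficients, giving Euclidean Poincaré on boxes. For balls meeting $\{x=0\}$, I would use a Jerison-type chaining or representation argument, which is what \cite{RS} carries out. Item (3) is therefore a citation resting on that Poincaré inequality.
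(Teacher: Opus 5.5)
Your arguments for items (1) and (2) are a reasonable self-contained substitute for the paper, which gives no proof and only cites \cite[Proposition~5.1, Theorem~6.4]{RS}. Two small corrections. First, no detour is needed in the $x$-part of the upper bound: precisely because $s^{-\alpha}$ is integrable, the straight segment already costs $\lesssim |x-x'|/(|x|+|x'|)^\alpha$, after splitting according to whether $|x-x'|\le\frac12\max(|x|,|x'|)$. Second, $x'\sim x$ on $B(\xi,r)$ holds only for $r\le\epsilon_0|x|^{1-\alpha}$; the range $\epsilon_0|x|^{1-\alpha}<r\le|x|^{1-\alpha}$ is harmless because both volume formulas are then $\sim r^{\mathcal Q}$.

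The genuine gap is in item (3). Your route is doubling plus a scale-invariant $L^2$ Poincar\'e inequality on $d$-balls, then the Grigor'yan--Saloff-Coste/Sturm theorem. It fails in part of the stated range, because the Poincar\'e inequality is false when $n=1$ and $\alpha\in[1/2,1)$.

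Indeed, in that case $\int_0^1 s^{-2\alpha}\,ds=\infty$, so there are Lipschitz $\psi_\epsilon$ on $\mathbb R$ with $\psi_\epsilon=0$ on $(-\infty,-\epsilon]$, $\psi_\epsilon=1$ on $[\epsilon,\infty)$, and $\int_{\mathbb R}|x|^{2\alpha}|\psi_\epsilon'(x)|^2\,dx\to 0$. (Rescale a fixed profile if $\alpha>1/2$; use a logarithmic profile on $\epsilon^2<|x|<\epsilon$ if $\alpha=1/2$.) Set $u_\epsilon(x,y)=\psi_\epsilon(x)$. Then $\nabla_y u_\epsilon=0$, and the $y$-sections of $d$-balls are bounded by your item (1), so $\int_{B((0,0),\lambda)}|\nabla_L u_\epsilon|^2\to 0$ for every $\lambda\ge 1$. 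On the other hand, $\int_B|u_\epsilon-(u_\epsilon)_B|^2$ stays bounded below for $B=B((0,0),1)$, since $B$ has positive measure on both sides of $\{x=0\}$.

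In fact, for these parameters the hyperplane $\{x=0\}$ has zero capacity. The energy form then splits over the half-spaces $\{\pm x>0\}\times\mathbb R^m$, and $e^{-tL}(\xi,\eta)=0$ whenever $\xi$ and $\eta$ lie on opposite sides. This contradicts the parabolic Harnack inequality, and the resulting Gaussian lower bound, that doubling plus Poincar\'e would force. So no Jerison-type chaining argument can supply the missing step.

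What item (3) needs is an argument that yields upper bounds only and is blind to this splitting. One option is on-diagonal bounds $e^{-tL}(\xi,\xi)\lesssim V(\xi,\sqrt t)^{-1}$ from a local Nash/Sobolev or relative Faber--Krahn inequality. Combine these with Davies--Gaffney (finite propagation speed) estimates with respect to $d$, which are available since $|\nabla_L d(\cdot,\eta)|\le 1$. As I understand it, the result the paper cites from \cite{RS} is of this type (subelliptic estimates plus finite speed of propagation), not a Poincar\'e argument. That is consistent with it covering the whole range and being stated only for almost every $\xi,\eta$.

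In the remaining cases ($n\ge 2$, or $n=1$ with $\alpha<1/2$) your route may be viable. There, however, the Poincar\'e inequality is something you would still have to prove rather than attribute to \cite{RS}.
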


With Lemma~\ref{le_geo}, we develop the following property.

\begin{lemma}\label{le_volume}
Let $n,m\ge 1$, $\alpha\in (0,1)$ and $\beta\ge 0$. Let $\xi=(x,y)\in \mathbb{R}^{n+m}$. There exist $0<c_1<c_2$ and $\epsilon_0 \in (0,1)$ such that for all $r\le \epsilon_0 |x|^{1-\alpha}$, one has
\begin{equation}
    B_n(x, c_1 r |x|^\alpha) \times B_m(y, c_1 r |x|^\beta) \subset B(\xi, r) \subset B_n(x, c_2 r |x|^\alpha) \times B_m(y, c_2 r |x|^\beta).
\end{equation}
Moreover, there exist $0<c_3<c_4$ such that for all $r>0$, one has for all $y\in \mathbb{R}^m$
\begin{equation}\label{le_volume3}
    B_n(0, c_3 r) \times B_m(y, c_3 r^{\beta+1-\alpha}) \subset B((0,y), r^{1-\alpha}) \subset B_n(0, c_4 r) \times B_m(y, c_4 r^{\beta+1-\alpha}).
\end{equation}
\end{lemma}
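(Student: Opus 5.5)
Both inclusions will be derived from the two-sided distance estimate in Lemma~\ref{le_geo}(1), $C_d^{-1}D\le d\le C_dD$. We use it in two directions. To show a product set lies inside a $d$-ball, we bound $D$ from above on that set. To show a $d$-ball lies inside a product set, we show that $D$ is large outside it.

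\textbf{First inclusion, $r\le\epsilon_0|x|^{1-\alpha}$.} Let $\eta=(x',y')\in B_n(x,c_1r|x|^\alpha)\times B_m(y,c_1r|x|^\beta)$. Then
\[
|x-x'|\le c_1r|x|^\alpha\le c_1\epsilon_0|x|\le \tfrac12|x|,
\]
so $|x|+|x'|\sim|x|$. This gives
\[
\frac{|x-x'|}{(|x|+|x'|)^\alpha}\lesssim c_1 r,
\qquad
\frac{|y-y'|}{(|x|+|x'|)^\beta}\lesssim c_1 r,
\]
where in the second bound we simply drop the term $|y-y'|^{\beta/(\beta+1-\alpha)}$ from the denominator. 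Hence $d(\xi,\eta)\le C_dD(\xi,\eta)<r$ once $c_1$ is small.

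For the reverse inclusion, let $\eta\in B(\xi,r)$, so $D(\xi,\eta)<C_dr$. We first control $|x'|$. If $|x'|\ge 2|x|$, then $|x-x'|\ge|x'|/2$, and therefore
\[
\frac{|x-x'|}{(|x|+|x'|)^\alpha}\gtrsim |x'|^{1-\alpha}\ge |x|^{1-\alpha}\ge \epsilon_0^{-1}r.
\]
This contradicts $D<C_dr$ when $\epsilon_0$ is small. So $|x'|\le 2|x|$, which gives $|x-x'|\lesssim r|x|^\alpha$.

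For the $y$-part we have
\[
|y-y'|< C_dr\bigl((3|x|)^\beta+|y-y'|^{\beta/(\beta+1-\alpha)}\bigr).
\]
Split into two cases according to which term in the denominator dominates.
\begin{itemize}
\item[(a)] If the first term dominates, then $|y-y'|\lesssim r|x|^\beta$ directly.
\item[(b)] If the second term dominates, then $|y-y'|^{(1-\alpha)/(\beta+1-\alpha)}\lesssim r$, i.e.\ $|y-y'|\lesssim r^{(\beta+1-\alpha)/(1-\alpha)}$. Using $r\le\epsilon_0|x|^{1-\alpha}$, this is at most $r\,\epsilon_0^{\beta/(1-\alpha)}|x|^\beta\lesssim r|x|^\beta$.
\end{itemize}
(If $\beta=0$ the $y$-denominator is bounded below by a constant and case (a) suffices.) This yields $c_2$.

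\textbf{Second statement, centre $(0,y)$ and radius $R=r^{1-\alpha}$.} Here $D$ simplifies to
\[
D=|x'|^{1-\alpha}+\frac{|y-y'|}{|x'|^\beta+|y-y'|^{\beta/(\beta+1-\alpha)}}.
\]
For the inner inclusion, take $|x'|\le c_3r$ and $|y-y'|\le c_3r^{\beta+1-\alpha}$. The first term is at most $c_3^{1-\alpha}R$. For the second term, first assume $|y-y'|\ge c_3 r^{\beta+1-\alpha}/2$. Then the second term is at most
\[
|y-y'|^{(1-\alpha)/(\beta+1-\alpha)}\le c_3^{(1-\alpha)/(\beta+1-\alpha)}\,R .
\]
If instead $|y-y'|$ is smaller, the bound still holds, since the map $t\mapsto t/(a+t^\gamma)$ with $\gamma<1$ is increasing. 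So $d<R$ for small $c_3$.

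For the outer inclusion, $D<C_dR$ gives $|x'|^{1-\alpha}\lesssim r^{1-\alpha}$, so $|x'|\lesssim r$. The two-case argument above, with $|x'|^\beta\lesssim r^\beta$, then gives $|y-y'|\lesssim r\cdot r^\beta$ or $|y-y'|\lesssim R^{(\beta+1-\alpha)/(1-\alpha)}=r^{\beta+1-\alpha}$. Since $r\cdot r^\beta$ has the wrong exponent, we rework case (a). There the condition $|y-y'|\lesssim R\,|x'|^\beta$ gives $|y-y'|\lesssim r^{1-\alpha}r^\beta=r^{\beta+1-\alpha}$, as required.

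The main obstacle is the upper inclusion in the first statement, specifically the case where the term $|y-y'|^{\beta/(\beta+1-\alpha)}$ dominates the denominator. That case forces the smallness condition $r\le\epsilon_0|x|^{1-\alpha}$, and it is where the choice of $\epsilon_0$ has to be made.
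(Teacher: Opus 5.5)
Your proof is correct and follows the paper's route: both inclusions come from the two-sided comparison $d\sim D$, and the outer inclusion first forces $|x'|\lesssim|x|$ by contradiction when $\epsilon_0$ is small, then splits the $y$-term according to which summand of its denominator dominates. Your $|x'|\le 2|x|$ replaces the paper's $|x-x'|\le |x|/2$, and you also write out the case centred at $(0,y)$, which the paper omits. One correction to your closing remark: case (b) only needs $r\le |x|^{1-\alpha}$, so it is the $x$-part contradiction, not case (b), that determines how small $\epsilon_0$ must be.
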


\begin{proof}[Proof of Lemma~\ref{le_volume}]
We first show that 
\begin{equation}\label{le_volume1}
    B(\xi, r) \subset B_n(x, c_2 r |x|^\alpha) \times B_m(y, c_2 r |x|^\beta).
\end{equation}
Take $\eta = (x',y')\in B(\xi, r)$. Then by Lemma~\ref{le_geo}, $D(\xi,\eta)\le C_d r$. We claim that
\begin{equation}\label{claim1}
    |x-x'| \le \frac{|x|}{2},
\end{equation}
provided $\epsilon_0$ is small enough. Indeed, if not, then $|x|+|x'|\le |x-x'| + 2|x| \le 3|x-x'|$ and thus
\begin{equation*}
    \frac{|x-x'|}{\left(|x|+|x'|\right)^\alpha} \ge 3^{-\alpha} |x-x'|^{1-\alpha} \ge 3^{-\alpha} 2^{\alpha-1} |x|^{1-\alpha},
\end{equation*}
which further implies that $D(\xi,\eta)\ge 3^{-\alpha} 2^{\alpha-1} |x|^{1-\alpha}$. Pick $\epsilon_0 < C_d^{-1} 3^{-\alpha} 2^{\alpha-1}$. Then the previous argument contradicts the fact that $D(\xi,\eta)\le C_d r$ and hence the claim \eqref{claim1} is verified. With \eqref{claim1}, one gets
\begin{equation}\label{eq_volume2}
    \frac{|x|}{2} \le |x'| \le \frac{3}{2}|x|\quad \textrm{and}\quad |x|+|x'| \le \frac{5}{2} |x|.
\end{equation}

Next, since $D(\xi,\eta)\le C_d r$, we must have
\begin{align*}
    \frac{|x-x'|}{\left(|x|+|x'|\right)^\alpha} \le C_d r \implies |x-x'|\le C_d r \left(|x|+|x'|\right)^\alpha \le C_d \left(\frac{5}{2}\right)^\alpha r |x|^\alpha.
\end{align*}
While for the $y$-part of $D(\xi,\eta)$, if $\left(|x|+|x'|\right)^{\beta+1-\alpha}\ge |y-y'|$ then one obtains
\begin{align*}
\frac{|y-y'|}{2\left(|x|+|x'|\right)^{\beta}} \le \frac{|y-y'|}{\left(|x|+|x'|\right)^{\beta} + |y-y'|^{\frac{\beta}{\beta+1-\alpha}}} \le C_d r.
\end{align*}
By \eqref{eq_volume2}, one yields
\begin{equation*}
    |y-y'|\le 2C_d r \left(|x|+|x'|\right)^{\beta} \le 2C_d \left(\frac{5}{2}\right)^\beta r |x|^\beta.
\end{equation*}
If $\left(|x|+|x'|\right)^{\beta+1-\alpha}\le |y-y'|$, then
\begin{align*}
|y-y'|^{\frac{1-\alpha}{\beta+1-\alpha}} \le 2 C_d r \implies |y-y'|\le \left(2C_d\right)^{\frac{\beta+1-\alpha}{1-\alpha}} r r^{\frac{\beta}{1-\alpha}}\le \left(2C_d\right)^{\frac{\beta+1-\alpha}{1-\alpha}} r |x|^{\beta}.
\end{align*}
Setting $c_2 = \max \left( C_d \left(\frac{5}{2}\right)^\alpha,  2C_d \left(\frac{5}{2}\right)^\beta, \left(2C_d\right)^{\frac{\beta+1-\alpha}{1-\alpha}} \right)$, one concludes \eqref{le_volume1}.

The proof of the converse direction:
\begin{equation}
    B_n(x, c_1 r |x|^\alpha) \times B_m(y, c_1 r |x|^\beta) \subset B(\xi, r)
\end{equation}
is obvious. Indeed, for $ \eta = (x',y')$ such that $|x-x'|\le c_1 r |x|^\alpha$ and $|y-y'|\le c_1 r |x|^\beta$, one directly bounds
\begin{equation*}
    d(\xi,\eta)\le C_d D(\xi,\eta)\le C_d \frac{c_1 r |x|^\alpha}{\left(|x|+|x'|\right)^\alpha} + C_d \frac{c_1 r |x|^\beta}{\left(|x|+|x'|\right)^\beta+|y-y'|^{\frac{\beta}{\beta+1-\alpha}}} \le 2 C_d c_1 r.
\end{equation*}
Choosing $c_1 = \frac{1}{4C_d}$, one confirms $\eta \in B(\xi,r)$ as desired.

The proof for \eqref{le_volume3} is similar and we omit it.
\end{proof}

\begin{remark}
Note that the constants \(c_1\) and \(c_2\) depend only on \(C_d\), \(\alpha\), and \(\beta\). Therefore, by choosing $\epsilon_0< \min \left( C_d^{-1} 3^{-\alpha} 2^{\alpha-1}, \frac{1}{2c_2} \right)$, one sees that for every \(r\le \epsilon_0 |x|^{1-\alpha}\) and every $\eta=(x',y')\in B_n(x, c_2 r |x|^\alpha)\times B_m(y, c_2 r |x|^\beta)$, we have
\[
|x|\le |x-x'|+|x'|
\le c_2 r |x|^\alpha + |x'|
\le c_2\epsilon_0 |x|+|x'|
\le \frac{|x|}{2}+|x'|.
\]
Hence $|x|\le 2|x'|\le 3|x|$.
\end{remark}

\begin{definition}
A ball \(B=B(\xi,r)=B((x,y),r)\) is said to be \emph{remote} if
\[
r\le \epsilon_0 |x|^{1-\alpha},
\]
where \(\epsilon_0\) is given by Lemma~\ref{le_volume}. For such a ball, every \(\eta=(x',y')\in B\) satisfies
\[
|x-x'|\le \frac{|x|}{2}
\qquad\text{and hence}\qquad
|x|\sim |x'|.
\]
\end{definition}

Next, we recall the following result from \cite[Theorem~3.1, Proposition~3.5]{BCLS}.

\begin{lemma}\label{le_BCLS}
Let $0<p<\infty$, $r_0,s_0\in (0,\infty]$ and $\theta_0 \in (0,1]$ and that the parameter $q = q(r_0,s_0,\theta_0)$ defined by
\begin{align*}
    \frac{1}{r_0}  = \frac{\theta_0}{q} + \frac{1-\theta_0}{s_0},
\end{align*}
satisfies $p\le q<\infty$. Suppose that
\begin{equation}
    \sup_{\lambda>0} \lambda  \left| \{|f|\ge \lambda\}\right|^{\frac{1}{r_0}}  \le C \|\nabla_L f\|_p^{\theta_0} \|f\|_\infty^{1-\theta_0} |\textrm{supp}(f)|^{\frac{1-\theta_0}{s_0}}.
\end{equation}
Then, we have
\begin{align*}
    \|f\|_q \le C \| \nabla_L f\|_p,\quad q = \frac{r_0 s_0 \theta_0}{s_0 - r_0(1-\theta_0)}
\end{align*}
for all $f\in C_c^\infty(\mathbb{R}^{n+m})$.
\end{lemma}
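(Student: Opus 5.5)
The plan is the truncation (``slicing'') method of Maz'ya, as in \cite{BCLS}: apply the weak-type hypothesis to dyadic truncations of \(f\), then sum the resulting inequalities using Hölder's inequality and the locality of \(\nabla_L\). Since the hypothesis involves \(|f|\), we may work with \(|f|\); note \(|\nabla_L|f||=|\nabla_L f|\) a.e.

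For \(k\in\mathbb Z\) set
\[
f_k:=\min\bigl((|f|-2^k)_+,\,2^k\bigr).
\]
These functions satisfy:
\begin{itemize}
\item \(0\le f_k\le 2^k\);
\item \(\operatorname{supp}(f_k)\subset\{|f|\ge 2^k\}\);
\item \(f_k\ge 2^k\) on \(\{|f|\ge 2^{k+1}\}\);
\item \(|\nabla_L f_k|=|\nabla_L f|\,\mathbf 1_{\{2^k<|f|\le 2^{k+1}\}}\) a.e.
\end{itemize}
The last property uses that \(\nabla_L\) is a first-order operator with locally bounded coefficients. It implies
\[
\sum_k N_k^p\le \|\nabla_L f\|_p^p,\qquad N_k:=\|\nabla_L f_k\|_p .
\]
The functions \(f_k\) are only Lipschitz with compact support. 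So first I would check that the hypothesis extends to such functions, by mollifying and passing to the limit: \(\|\nabla_L(f_k*\varphi_\varepsilon)\|_p\to N_k\) since the coefficients are continuous, and the supports and sup norms are controlled. I expect this approximation to be the only genuinely technical point. Everything else is bookkeeping.

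Next, write \(a_k:=|\{|f|\ge 2^k\}|\), and apply the hypothesis to \(f_k\) at level \(\lambda=2^k\). This gives
\[
2^k a_{k+1}^{1/r_0}\le C N_k^{\theta_0}\,2^{k(1-\theta_0)}a_k^{(1-\theta_0)/s_0}.
\]
Put \(u=\theta_0 r_0/q\) and \(v=(1-\theta_0)r_0/s_0\). The defining relation for \(q\) is exactly \(u+v=1\). Raising the inequality to the power \(r_0\) and multiplying by \(2^{k(q-\theta_0 r_0)}\) yields
\[
2^{kq}a_{k+1}\le C\,(N_k^q)^{u}\,(2^{kq}a_k)^{v},
\]
because \(q-\theta_0r_0=qv\).

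Summing over \(k\) and applying Hölder's inequality with exponents \(1/u\) and \(1/v\) (the case \(v=0\) is immediate) gives
\[
S:=\sum_k 2^{kq}a_k\le C\Bigl(\sum_k N_k^q\Bigr)^{u}S^{v}.
\]
Since \(q\ge p\), we have \(\sum_k N_k^q\le(\sum_k N_k^p)^{q/p}\le\|\nabla_L f\|_p^q\). Because \(f\) is bounded with compact support, \(S<\infty\), so we may divide by \(S^v\) to get \(S^{u}\le C\|\nabla_L f\|_p^{qu}\). Finally, \(\|f\|_q^q\sim S\) by the layer-cake formula, which gives \(\|f\|_q\le C\|\nabla_L f\|_p\). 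The cases \(r_0=\infty\) or \(s_0=\infty\) are handled by the same computation, with the conventions \(1/\infty=0\).
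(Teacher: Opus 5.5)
Your argument is correct and is the Maz'ya truncation proof of \cite[Theorem~3.1, Proposition~3.5]{BCLS}, which the paper cites rather than reproves: the truncations $f_k=\min((|f|-2^k)_+,2^k)$, the use of $p\le q$ through $\ell^p\subset\ell^q$, and the H\"older step with $u+v=1$ are all as there. Your mollification step plays the role of BCLS's standing assumption that the function class is stable under truncation; when you pass to the limit on the left-hand side, note that $f_k*\varphi_\varepsilon\to f_k$ uniformly. One cosmetic slip: after raising to the power $r_0$ you must multiply by $2^{k(q-r_0)}$, not $2^{k(q-\theta_0r_0)}$, to reach $2^{kq}a_{k+1}\le C(N_k^q)^u(2^{kq}a_k)^v$; the identity $q-\theta_0r_0=qv$ is then exactly what makes the right-hand side come out as you state.
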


\begin{remark}
In particular, if we take $p=1$ and show that
\begin{equation}\label{showthis}
\sup_{\lambda>0} \lambda^{\frac{\mathcal{Q}}{\mathcal{Q}-1}}  \left|\left\{ |f| \ge \lambda \right\}\right| \le C \|\nabla_L f\|_1 \|f\|_q^{\frac{1}{\mathcal{Q}-1}},
\end{equation}
then Lemma~\ref{le_BCLS} yields $(\textrm{S}_{\mathcal{Q}}^1)$ and hence \eqref{ISO_Q}.
\end{remark}

\section{Proof of Theorem~\ref{thm_ISO}}

Given a symmetric Markov generator $\mathcal{L}$ with carré du champ $\Gamma$ and iterate $\Gamma_2$, the inequality
\begin{align*}
    \Gamma_2(f) \ge K \Gamma(f) + \frac{1}{N} \left(L(f)\right)^2,\quad \forall f\in C^\infty
\end{align*}
encodes both a 'Ricci curvature' lower bound $K$ and 'effective dimension' $N$. For conventional issues, we let $\mathcal{L}$ be a \textbf{non-positive} self-adjoint diffusion operator. Formally, for smooth function $f,g$ one defines bilinear form:
\begin{align}\label{carre1}
    \Gamma(f,g) = \frac{1}{2} \left( \mathcal{L}(fg) - f \mathcal{L}(g) - g\mathcal{L}(f)\right).
\end{align}
Note that $\Gamma$ is symmetric and $\Gamma(f) = \Gamma(f,f)$ is the so-called \textit{carre du champ} operator. Next, one defines the iterated bilinear form:
\begin{align}\label{carre2}
    \Gamma_2(f,g) = \frac{1}{2} \left( \mathcal{L}\Gamma(f,g) - \Gamma(f,\mathcal{L}(g)) - \Gamma(g,\mathcal{L}(f)) \right),
\end{align}
and $\Gamma_2(f) = \Gamma_2(f,f)$ is the second \textit{carre du champ} operator.

\begin{lemma}\label{le_Ric_Grushin}
Let $n,m\ge 1$, $\alpha \in (0,1)$ and $\beta \ge 0$. Let $\Gamma$ (resp. $\Gamma_2$) be the \textit{carre du champ} (resp. second \textit{carre du champ}) of $L$. The following curvature dimension inequality holds
\begin{align*}
\Gamma_2(f) \ge - \frac{C(n,m,\alpha,\beta)}{|x|^{2(1-\alpha)}} \Gamma(f) + C'(n,m,\alpha,\beta) \left(L(f) \right)^2
\end{align*}
for all $f\in C^\infty( \mathbb{R}^{n}\setminus\{0\} \times \mathbb{R}^m)$.
\end{lemma}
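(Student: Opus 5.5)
The plan is a direct Bochner-type computation in the frame of weighted vector fields, organized by homogeneity. Take the generator to be $\mathcal L=-L=\nabla_x\cdot(|x|^{2\alpha}\nabla_x)+|x|^{2\beta}\Delta_y$, so that $\Gamma(f)=|x|^{2\alpha}|\nabla_xf|^2+|x|^{2\beta}|\nabla_yf|^2=|\nabla_Lf|^2$. On $(\mathbb R^n\setminus\{0\})\times\mathbb R^m$ introduce the smooth vector fields
\[
X_i=|x|^\alpha\partial_{x_i},\qquad Y_j=|x|^\beta\partial_{y_j}.
\]
In this frame
\[
\mathcal L=\sum_i X_i^2+\sum_j Y_j^2+Z,\qquad Z=\alpha|x|^{\alpha-2}\sum_i x_iX_i .
\]
The key structural facts are:
\begin{itemize}
\item every commutator $[X_i,X_k]$ and $[X_i,Y_j]$ is a combination of the frame fields;
\item the coefficients in these commutators, e.g. $[X_i,Y_j]=\beta x_i|x|^{\alpha-2}Y_j$, are of size $|x|^{\alpha-1}$;
\item $[Y_j,Y_l]=0$.
\end{itemize}
A consistency check is the anisotropic dilation $\delta_\lambda(x,y)=(\lambda x,\lambda^{\beta+1-\alpha}y)$: it commutes with the frame up to the factor $\lambda^{-(1-\alpha)}$. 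Hence $\Gamma_2$ and $\Gamma$ scale so that the only admissible curvature weight is $|x|^{-2(1-\alpha)}$, which is exactly the weight in the statement.

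Step 1 is to expand $\Gamma_2(f)$ with the standard formula for a generator of the form $\sum W_k^2+W_0$. Write $\{W_k\}=\{X_i\}\cup\{Y_j\}$. One obtains
\[
\Gamma_2(f)=\sum_{k,l}(W_kW_lf)^2+\sum_{k,l} c_{kl}(x)\,W_kf\,W_lW_{k'}f+\sum_{k,l}d_{kl}(x)\,W_kf\,W_lf .
\]
The coefficients obey the bounds $|c_{kl}|\lesssim|x|^{\alpha-1}$ and $|d_{kl}|\lesssim|x|^{2(\alpha-1)}$. They come from the commutators and from derivatives of $Z$: the coefficient of $Z$ has size $|x|^{\alpha-1}$, and one more frame derivative produces another factor $|x|^{\alpha-1}$.

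Step 2 absorbs the cross terms by Cauchy--Schwarz:
\[
|c_{kl}W_kf\,W_lW_{k'}f|\le \tfrac12 (W_lW_{k'}f)^2+C|x|^{-2(1-\alpha)}\Gamma(f).
\]
This gives
\[
\Gamma_2(f)\ge \tfrac12\sum_{k,l}(W_kW_lf)^2-C|x|^{-2(1-\alpha)}\Gamma(f).
\]

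Step 3 bounds the dimensional term. Since $\mathcal Lf=\sum_kW_k^2f+Zf$ and $|Zf|^2\le \alpha^2|x|^{-2(1-\alpha)}\Gamma(f)$, we get
\[
(\mathcal Lf)^2\le 2(n+m)\sum_{k,l}(W_kW_lf)^2+2\alpha^2|x|^{-2(1-\alpha)}\Gamma(f).
\]
Combining this with Step 2 yields the claim with $C'=\bigl(4(n+m)\bigr)^{-1}$ and an enlarged $C$; note that $(Lf)^2=(\mathcal Lf)^2$.

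The main obstacle is Step 1: correctly computing the commutator and drift contributions, and checking that each carries at least one factor of $|x|^{\alpha-1}$ per frame slot. In particular, no bare first-derivative term may appear without a matching power. I would organize this bookkeeping by the dilation weights rather than by brute force. In the $\mathbb R^n$ part one can alternatively use polar coordinates with $\rho=(1-\alpha)^{-1}|x|^{1-\alpha}$, so that $X$-derivatives become $\partial_\rho$ plus angular derivatives weighted by $((1-\alpha)\rho)^{-1}$. The $y$-part then only sees the warping factor $\rho^{\beta/(1-\alpha)}$, whose logarithmic derivatives are $O(\rho^{-1})$. This reproduces the doubly warped product computation from the introduction, with $\rho^{-2}\sim|x|^{-2(1-\alpha)}$.
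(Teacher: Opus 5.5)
Your argument is correct and has the same skeleton as the paper's proof. Both expand $\Gamma_2$, absorb the mixed first/second-order terms by Young's inequality, and control $(Lf)^2$ by a trace inequality plus a drift term. The difference is the frame in which the expansion is done.

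\textbf{The paper's route.} It works in Euclidean coordinates with $a=|x|^{2\alpha}$ and $b=|x|^{2\beta}$. The positive part is the weighted Euclidean Hessian $a^2f_{ij}^2+2abf_{i\mu}^2+b^2f_{\mu\nu}^2$. Each lower-order term is then bounded individually using explicit formulas for $a_i$, $a_{ij}$, $\Delta_x a$, $b_i$, $\Delta_x b$. Along the way it drops the term $4\alpha(1-\alpha)|x|^{4\alpha-4}(x\cdot\nabla_x f)^2$ because of its sign.

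\textbf{Your route.} You use the adapted frame $X_i=|x|^\alpha\partial_{x_i}$, $Y_j=|x|^\beta\partial_{y_j}$ and the standard identity $\Gamma_2(f)=\sum_{k,l}(W_kW_lf)^2+\sum_k W_kf\,[\mathcal L,W_k]f$. The commutators and the drift $Z$ close in the frame. Their coefficients depend only on $x$, are homogeneous of degree $\alpha-1$, and are annihilated by the $Y_j$. Hence the factor $|x|^{\alpha-1}$ per first-order slot comes for free, and no derivatives of the weights need to be computed.

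\textbf{Comparison.} Your approach is more structural and carries over directly to other weights with the same commutator structure; it also makes the dilation-invariance explanation of the weight $|x|^{-2(1-\alpha)}$ transparent. The paper's approach is fully explicit and term-by-term checkable.

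\textbf{A small fix in Step 2.} A given square $(W_lW_pf)^2$ appears in several cross terms: it is summed against $W_kf$ over $k$, and it also arises from the other ordering $W_pW_lf$. So a Young coefficient of $\tfrac12$ per term does not leave $\tfrac12\sum_{k,l}(W_kW_lf)^2$. Either take a small $\epsilon(n,m)$ per term, or first apply Cauchy--Schwarz to the whole sum to get the bound $C|x|^{\alpha-1}\Gamma(f)^{1/2}\bigl(\sum_{k,l}(W_kW_lf)^2\bigr)^{1/2}$, and then apply Young once. This only affects the constants.
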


\begin{proof}[Proof of Lemma~\ref{le_Ric_Grushin}]
Since $L$ is non-negative, we set $\mathcal{L} = -L$ and calculate \eqref{carre1} and \eqref{carre2} directly. Put $r=|x|$, $a = r^{2\alpha}$ and $b=r^{2\beta}$. Then
\begin{equation*}
\mathcal L = -a\Delta_x - b\Delta_y+\nabla a\cdot \nabla_x.
\end{equation*}

For simplicity, we use indices $i,j\in\{1,\dots,n\}$, $\mu,\nu\in\{1,\dots,m\}$, one gets directly from the definition \eqref{carre1} that
\begin{equation}\label{eq_gamma1}
    \Gamma(f)=a f_i^2+b f_\mu^2.
\end{equation}

Next, by \eqref{carre2}, a direct calculation gives
\begin{align}\label{eq_gamma2}
\Gamma_2(f)
={}&a^2 f_{ij}^2+2ab f_{i\mu}^2+b^2 f_{\mu\nu}^2\\ \nonumber
&+2a a_i f_j f_{ij} + a a_j f_j \Delta_x f\\ \nonumber
&+2a b_i f_\mu f_{i\mu}+ a b_i f_i \Delta_y f\\ \nonumber
&+\Bigl(-\frac a2 \Delta_x a+\frac12|\nabla a|^2\Bigr)f_i^2
-a a_{ij}f_i f_j\\ \nonumber
&+\Bigl(-\frac a2 \Delta_x b+\frac12\nabla a\cdot \nabla b\Bigr)f_\mu^2.
\end{align}
Note that
\begin{align*}
    a_i=2\alpha r^{2\alpha-2}x_i,\quad
|\nabla a|^2 = 4\alpha^2 r^{4\alpha-2},
\quad
\Delta_x a = -2\alpha(n+2\alpha-2)r^{2\alpha-2},
\end{align*}
\begin{equation*}
    a_{ij} = 2\alpha r^{2\alpha-2}\delta_{ij}
 + 4\alpha(\alpha-1)r^{2\alpha-4}x_i x_j,
\end{equation*}
and 
\begin{align*}
    b_i = 2\beta r^{2\beta-2}x_i,\quad
|\nabla b|^2 = 4\beta^2 r^{4\beta-2},
\quad
\Delta_x b = -2\beta(n+2\beta-2)r^{2\beta-2}.
\end{align*}

We estimate the lower-order terms in \eqref{eq_gamma2} by Young's inequality.

First, using $|\Delta_x f|\le \sqrt n|D_x^2 f|$, where $|D_x^2f|^2 = \sum_{1\le i,j\le n}|\partial_{x_i,x_j}f|^2$, and Cauchy-Schwartz inequality and then Young's inequality, one obtains
\begin{equation}\label{1}
    2a a_i f_j f_{ij} + a a_j f_j \Delta_x f \ge
-\frac14 a^2|D_x^2 f|^2-C|\nabla a|^2|\nabla_x f|^2.
\end{equation}
Also, since $\alpha<1$,
\begin{equation}\label{2}
-a a_{ij}f_i f_j = -2\alpha r^{4\alpha-2}|\nabla_x f|^2 + 4\alpha(1-\alpha)r^{4\alpha-4}(x\cdot \nabla_x f)^2
\ge -2\alpha r^{4\alpha-2}|\nabla_x f|^2.
\end{equation}
Hence
\begin{equation}\label{3}
    \Bigl(-\frac a2 \Delta_x a + \frac12| \nabla a|^2\Bigr)f_i^2-a a_{ij}f_i f_j \ge
-C r^{4\alpha-2}|\nabla_x f|^2.
\end{equation}

Next, it is clear that
\begin{equation}\label{4}
    2a b_i f_\mu f_{i\mu}
\ge
-a b f_{i\mu}^2-\frac{a|\nabla b|^2}{b} |\nabla_y f|^2,
\end{equation}
and, using $|\Delta_y f|\le \sqrt m|D_y^2 f|$, where $|D_y^2f|^2 = \sum_{1\le \mu,\nu\le m}|\partial_{y_\mu,y_\nu}f|^2$, 
\begin{equation}\label{5}
    a b_i f_i \Delta_y f \ge -\frac14 b^2|D_y^2 f|^2 - C \frac{a^2|\nabla b|^2}{b^2}|\nabla_x f|^2.
\end{equation}

Finally,
\begin{equation}\label{6}
    \Bigl( -\frac a2 \Delta_x b + \frac12 \nabla a \cdot \nabla b \Bigr) f_\mu^2 \ge
 -  C r^{2\alpha+2\beta-2}|\nabla_y f|^2.
\end{equation}

Insert \eqref{1}--\eqref{6} into \eqref{eq_gamma2}. 
We obtain
\begin{align}\label{7}
{}&\Gamma_2(f)\\ \nonumber
&\ge \frac34 a^2|D_x^2 f|^2 + ab|D_{xy}^2 f|^2 + \frac34 b^2|D_y^2 f|^2 - C \Bigl(r^{4\alpha-2}|\nabla_x f|^2 + r^{2\alpha+2\beta-2}|\nabla_y f|^2\Bigr)\\ \nonumber
&= \frac34 a^2|D_x^2 f|^2 + ab|D_{xy}^2 f|^2 + \frac34 b^2|D_y^2 f|^2 -\frac{C}{|x|^{2-2\alpha}}\Gamma(f)
\end{align}

On the other hand, by trace inequality, with $C'=12(n+m)$, 
\begin{align}\label{8}
C'^{-1}(\mathcal L f)^2 &\le
C'^{-1} 3a^2(\Delta_x f)^2+ C'^{-1} 3b^2(\Delta_y f)^2+ C'^{-1}3|\nabla a|^2|\nabla_x f|^2\\ \nonumber
&\le C'^{-1} 3n a^2|D_x^2 f|^2 + C'^{-1} 3m b^2|D_y^2 f|^2 + C'^{-1} 12\alpha^2 r^{4\alpha-2}|\nabla_x f|^2\\ \nonumber
&\le \frac14 a^2|D_x^2 f|^2+\frac14 b^2|D_y^2 f|^2+\frac1{|x|^{2-2\alpha}}\Gamma(f).
\end{align}

Combining \eqref{7} and \eqref{8}, we conclude that
\begin{equation*}
    \Gamma_2(f)\ge -\frac{C}{|x|^{2-2\alpha}}\Gamma(f)+\frac1{C'}(\mathcal L f)^2
\end{equation*}
for some $C=C(n,m,\alpha,\beta)>0$, as desired.

\end{proof}

Next, we establish a gradient estimate for the heat semigroup. The argument is inspired by \cite[Sections~3.2--3.3]{Gilles}.

\begin{lemma}\label{le_Grushin_gradient}
Let $n, m\ge 1$, $\alpha \in (0,1)$ and $\beta \ge 0$. The following gradient estimate holds
\begin{align*}
    \left|\nabla_{L}e^{-tL}(\xi,\eta)\right|\le \left(\frac{1}{\sqrt{t}} + \frac{1}{|x|^{1-\alpha}}\right) \frac{C}{V(\xi,\sqrt{t})}e^{-\frac{d(\xi,\eta)^2}{ct}},
\end{align*}
where $\xi = (x,y) \in \mathbb{R}^n \setminus \{0\} \times \mathbb{R}^m$.
\end{lemma}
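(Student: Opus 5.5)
The plan is to split into two regimes according to whether the ball $B(\xi,\sqrt t)$ is remote, i.e.\ whether $\sqrt t\le \epsilon_0|x|^{1-\alpha}$ or not.

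\emph{Non-remote case.} Suppose $\sqrt t>\epsilon_0|x|^{1-\alpha}$. Then $\frac{1}{\sqrt t}\lesssim \frac{1}{|x|^{1-\alpha}}$, and the claim reduces to the bound
\[
|\nabla_L e^{-tL}(\xi,\eta)|\lesssim |x|^{-(1-\alpha)}\,\frac{C}{V(\xi,\sqrt t)}\,e^{-d^2/ct}.
\]
In this case I will use a localized version of the argument of the remote case below, at scale $s=\epsilon_0|x|^{1-\alpha}$ instead of $\sqrt t$. Write $e^{-tL}=e^{-(s^2/2)L}\circ e^{-(t-s^2/2)L}$. Then apply the remote-case gradient bound at time $s^2/2$ (whose prefactor is $1/s\sim |x|^{-(1-\alpha)}$) and integrate against the Gaussian upper bound of Lemma~\ref{le_geo}(3). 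Doubling then converts $V(\xi,s)$ and $V(\xi,\sqrt t)$ factors appropriately. A cleaner alternative is to bound the gradient directly by the local argument below with the cutoff at scale $s$ while the time is $t$; this costs $1/s$ in the prefactor, which is exactly the allowed loss.

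\emph{Remote case.} Suppose $r:=\sqrt t\le\epsilon_0|x|^{1-\alpha}$. On $B=B(\xi,r)$ (and on its double, after shrinking $\epsilon_0$) we have $|x'|\sim|x|$ by the remark after Lemma~\ref{le_volume}. Hence Lemma~\ref{le_Ric_Grushin} gives the curvature-dimension condition $CD(-K,N)$ on $2B$ with $K\sim|x|^{-2(1-\alpha)}\lesssim r^{-2}$ and a finite $N=1/C'$. Following \cite{Gilles}, I will derive a local Li--Yau or Bakry--Qian type gradient estimate for positive solutions $u$ of $\partial_t u=-Lu$ on the parabolic cylinder $2B\times(t/2,t]$:
\[
\Gamma(\log u)\le C\Big(\frac1t+\frac1{r^2}+K\Big)\Big(1+\log\frac{\sup u}{u}\Big),
\]
or alternatively $\sqrt{\Gamma(u)}\le C(r^{-1}+\sqrt K)\sup_{2B\times(t/2,t]}u$. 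The route is the standard maximum-principle/cutoff argument, which requires a cutoff $\phi$ with $\Gamma(\phi)\lesssim r^{-2}$ and $L\phi\gtrsim -r^{-2}$. Such a cutoff can be built from the Euclidean box description in Lemma~\ref{le_volume}, by rescaling $x$ by $r|x|^\alpha$ and $y$ by $r|x|^\beta$; the coefficients are comparable to constants on the box, so bounds on the derivatives of $\phi$ are straightforward. I will apply this to $u(\cdot)=e^{-\cdot L}(\cdot,\eta)$ and take $\eta$ fixed. The Gaussian upper bound together with doubling (which gives $V(\zeta,\sqrt t)\sim V(\xi,\sqrt t)$ and $d(\zeta,\eta)\ge d(\xi,\eta)-2r$ for $\zeta\in 2B$) yields
\[
\sup_{2B\times(t/2,t]}u\lesssim V(\xi,\sqrt t)^{-1}e^{-d(\xi,\eta)^2/ct}.
\]
Combining gives the stated bound with prefactor $t^{-1/2}+|x|^{-(1-\alpha)}$.

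The main obstacle is justifying the localized Bakry--\'Emery/Li--Yau argument. The difficulties are that $L$ is only elliptic away from $\{x=0\}$, the curvature bound of Lemma~\ref{le_Ric_Grushin} holds only for $x\ne0$, and the dimension constant is not sharp. Remoteness keeps the cylinder away from the degeneracy, so smoothness of $u$ there is fine. I would first try the direct Bernstein-type argument on $\phi^2\Gamma(u)+Au^2$, which only needs $\Gamma_2\ge -K\Gamma$ and avoids the $N$-term entirely. This should suffice for a bound of the form $\sup$ times $(r^{-1}+\sqrt K)$.
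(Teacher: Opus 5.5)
Your argument is correct, but it is organized differently from the paper's.

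The paper makes no case split. It fixes the spatial scale at the maximal remote radius $r=\epsilon_0|x|^{1-\alpha}$ and applies a local Li--Yau inequality under $\mathrm{CD}(-c/r^2,N)$ \cite{ZZ}. That inequality is pointwise and holds for every $t>0$, so at $\xi$ it gives $|\nabla_L u|^2\le \gamma\, u\,|\partial_t u|+C u^2(t^{-1}+r^{-2})$. The paper then inserts the Gaussian upper bound for $u$ and Sturm's time-derivative bound $|\partial_t e^{-tL}(\xi,\eta)|\lesssim t^{-1}V(\xi,\sqrt t)^{-1}e^{-d(\xi,\eta)^2/ct}$ \cite{Sturm3}.

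You instead use a sup-type local gradient bound on a parabolic cylinder. This needs only the Gaussian upper bound: no time-derivative estimate, and it can avoid the dimensional term $C'(Lf)^2$ of Lemma~\ref{le_Ric_Grushin}. The price is the case split and, for $\sqrt t>\epsilon_0|x|^{1-\alpha}$, the semigroup-convolution step. That step is sound: $t-s^2/2\ge t/2$, and the Gaussian at scale $s\le\sqrt t$ absorbs the doubling factor $(1+d(\xi,\zeta)/\sqrt t)^{\mathcal Q}$ coming from $V(\zeta,\sqrt t)^{-1}$. Your ``cleaner alternative'' also works and is the closest analogue of the paper's single-scale argument. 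There you take a cylinder $B(\xi,\rho)\times(t-\rho^2/2,t]$ with $\rho=\min(\sqrt t,\epsilon_0|x|^{1-\alpha})$. It sits inside $B(\xi,2\sqrt t)\times(t/2,t]$, where the Gaussian bound controls the sup.

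One correction to your last paragraph. The Bernstein argument for $\phi^2\Gamma(u)+Au^2$ does not run on the scalar inequality $\Gamma_2\ge -K\Gamma$ alone. The cutoff produces the cross term $\Gamma(\phi,\Gamma(u))$, which must be absorbed by a Hessian term. What you actually need is $\Gamma(\Gamma(u))\lesssim \Gamma(u)\bigl(\Gamma_2(u)+K\Gamma(u)\bigr)$. This does hold here, since the proof of Lemma~\ref{le_Ric_Grushin} exhibits $a^2|D_x^2u|^2+ab|D_{xy}^2u|^2+b^2|D_y^2u|^2$ inside $\Gamma_2(u)$.

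Simpler still, use your own rescaling $x\mapsto (x-x_0)/(r|x_0|^\alpha)$, $y\mapsto (y-y_0)/(r|x_0|^\beta)$, $t\mapsto t/r^2$. On a remote box the equation becomes uniformly parabolic, with coefficients bounded in every $C^k$ uniformly in $\xi$ and $r$. Classical interior estimates then give $|\nabla_L u(\xi,t)|\lesssim r^{-1}\sup|u|$ over the cylinder, with no curvature input at all.
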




\begin{proof}[Proof of Lemma~\ref{le_Grushin_gradient}]
Let \(\xi=(x,y)\in \mathbb{R}^n\setminus\{0\}\times \mathbb{R}^m\), and let \(B=B(\xi,r)\) be a remote ball with radius \(r\le \epsilon_0 |x|^{1-\alpha}\). By Lemma~\ref{le_Ric_Grushin}, one readily checks that for every \(\xi'\in B\), the curvature-dimension inequality \(\mathrm{CD}(-c_1/r^2,c_2)\) holds uniformly for some constants \(c_1,c_2>0\). Therefore, by \cite[Theorem~1.4]{ZZ}; see also \cite[Theorem~5.1]{XDL}, a local Li--Yau type estimate implies that for every \(\xi'\in B/2\), every \(\gamma>1\), and every positive solution \(u_t\) of
\[
\partial_t u_t=-Lu_t,
\]
one has
\begin{equation}\label{le_grushin_grad1}
    \frac{|\nabla_L u_t|^2}{u_t^2}-\gamma \frac{\partial_t u_t}{u_t}
    \le
    C(\gamma,n,m)\left(\frac{1}{t}+\frac{1}{r^2}\right).
\end{equation}
Combining \eqref{le_grushin_grad1} with \cite[Theorem~2.6]{Sturm3}, namely
\begin{equation}\label{le_grushin_grad2}
    \left| \partial_t e^{-tL}(\xi,\eta)\right|
    \le
    \frac{C}{t\,V(\xi,\sqrt t)}e^{-\frac{d(\xi,\eta)^2}{ct}},
\end{equation}
yields the desired gradient estimate.
\end{proof}

The next step is to establish the following Hardy type inequality.
\begin{lemma}\label{le_hardy}
Let $n, m\ge 1$, $\alpha \in (0,1)$ and $\beta \ge 0$. Then, the following Hardy type inequality holds
\begin{align}\label{eq_hardy}
    \int_{\mathbb{R}^{n+m}} \frac{|u(\xi)|}{|x|^{1-\alpha}} d\xi \le C_H \int_{\mathbb{R}^{n+m}} |\nabla_{L}u(\xi)| d\xi,
\end{align}
for all $u\in C_c^\infty(\mathbb{R}^{n+m})$.
\end{lemma}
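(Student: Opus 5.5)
The plan is to prove \eqref{eq_hardy} one $y$-slice at a time, using a one-dimensional-in-spirit divergence (integration by parts) argument in the $x$-variables. Only the $x$-component of the gradient is needed: since $|\nabla_L u|\ge |x|^\alpha|\nabla_x u|$, it suffices to show that for each fixed $y\in\mathbb{R}^m$ and every $v\in C_c^\infty(\mathbb{R}^n)$,
\begin{equation*}
\int_{\mathbb{R}^n}\frac{|v(x)|}{|x|^{1-\alpha}}\,dx\le \frac{1}{n+\alpha-1}\int_{\mathbb{R}^n}|x|^\alpha|\nabla_x v(x)|\,dx .
\end{equation*}
Applying this to $v=u(\cdot,y)$ and integrating in $y$ then gives \eqref{eq_hardy} with $C_H=(n+\alpha-1)^{-1}$. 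This constant is finite because $n\ge 1$ and $\alpha>0$ give $n+\alpha-1>0$.

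The key object is the vector field $X(x)=x|x|^{\alpha-1}$ on $\mathbb{R}^n$. It satisfies $|X|=|x|^\alpha$, which is continuous on all of $\mathbb{R}^n$ and vanishes at the origin. Away from the origin its divergence is
\begin{equation*}
\operatorname{div}_x X=(n+\alpha-1)|x|^{\alpha-1}.
\end{equation*}
This divergence is locally integrable, since $\alpha-1>-1\ge -n$. In particular this holds even for $n=1$, where integrability near $0$ uses $\alpha>0$.

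To avoid the non-smoothness of $|v|$, set $w_\varepsilon=\sqrt{v^2+\varepsilon^2}-\varepsilon$, which is smooth, compactly supported, and satisfies $|\nabla w_\varepsilon|\le|\nabla v|$. Integrate $w_\varepsilon\operatorname{div}X$ over $\{|x|>\delta\}$ and apply the divergence theorem. The boundary term on $\{|x|=\delta\}$ is bounded by $\|w_\varepsilon\|_\infty\,\delta^\alpha\,\delta^{n-1}|\mathbb{S}^{n-1}|$, which tends to $0$ as $\delta\to0$ (for $n=1$ it is two point evaluations of size $\delta^\alpha$). This yields
\begin{equation*}
(n+\alpha-1)\int w_\varepsilon|x|^{\alpha-1}\,dx=-\int X\cdot\nabla w_\varepsilon\,dx\le\int|x|^\alpha|\nabla_x v|\,dx .
\end{equation*}
Letting $\varepsilon\to0$ by monotone convergence gives the slice inequality.

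The only delicate point is the singularity of $X$ at $x=0$, which affects both the justification of the integration by parts and the integrability of the weight. Both are handled by the excision of $\{|x|\le\delta\}$ and the estimate of the boundary flux above. Finally, Fubini, together with the pointwise bound $|x|^\alpha|\nabla_x u|\le|\nabla_L u|$, concludes the proof of \eqref{eq_hardy}.
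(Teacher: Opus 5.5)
Your proof is correct and is essentially the paper's argument. Both work on each $y$-slice, integrate radially in $x$, and use only $|x|^\alpha|\nabla_x u|\le|\nabla_L u|$. Your divergence-theorem computation with $X=x|x|^{\alpha-1}$ is the integrated-by-parts form of the paper's bound $|u(r\theta,y)|\le\int_r^\infty|\partial_s u(s\theta,y)|\,ds$ followed by Fubini.

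One small gain on your side is that you keep the exact weight $r^{n-2+\alpha}$, which gives $C_H=(n+\alpha-1)^{-1}$ uniformly for all $n\ge1$. The paper instead uses $r^\alpha\le s^\alpha$, which leaves $\int_0^s r^{n-2}\,dr$; this diverges when $n=1$, the case the paper omits.
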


\begin{proof}[Proof of Lemma~\ref{le_hardy}]
We give a direct proof. Assume first that \(n\ge 2\). Passing to polar coordinates in the \(x\)-variable, the left-hand side of \eqref{eq_hardy} becomes
\begin{align*}
    C(n) \int_{\mathbb{R}^m} \int_0^\infty \int_{\mathbb{S}^{n-1}}
    \frac{|u((r,\theta);y)|}{r^{1-\alpha}}\, d\theta\, r^{n-1}\, dr\, dy.
\end{align*}
Using the inequality
\begin{align*}
    |u((r,\theta);y)|
    \le
    \int_r^\infty |\partial_s u((s,\theta);y)|\, ds,
\end{align*}
we deduce that the left-hand side of \eqref{eq_hardy} is bounded by
\begin{align*}
    C(n) \int_{\mathbb{R}^m} \int_{\mathbb{S}^{n-1}} \int_0^\infty \int_r^\infty
    |\partial_s u((s,\theta);y)|\, ds\, r^{n-2+\alpha}\, dr\, d\theta\, dy.
\end{align*}
Since \(r\le s\) in the region of integration, we have \(r^\alpha\le s^\alpha\). Moreover,
\[
|s^\alpha \partial_s u|\le |\nabla_L u|.
\]
Therefore, the above expression is bounded by
\begin{align*}
    C(n) \int_{\mathbb{R}^m} \int_{\mathbb{S}^{n-1}} \int_0^\infty \int_0^s
    |\nabla_L u((s,\theta);y)|\, r^{n-2}\, dr\, ds\, d\theta\, dy
    \\
    \le
    C(n) \int_{\mathbb{R}^m} \int_{\mathbb{S}^{n-1}} \int_0^\infty
    |\nabla_L u((s,\theta);y)|\, s^{n-1}\, ds\, d\theta\, dy
    \\
    \le
    C(n) \int_{\mathbb{R}^{n+m}} |\nabla_L u(\xi)|\, d\xi.
\end{align*}
The case \(n=1\) is similar, and we omit the details.
\end{proof}

The next lemma is a variant of the argument in \cite[Section~2.1]{DR_hardy}. We include a proof for completeness.

\begin{lemma}\label{le_remoteball}
Let $R>0$. There exists a sequence of balls $\{B_n^j = B(x_j, r_j)\}_{j \ge 0}$ such that 
\begin{enumerate}
    \item $\mathbb{R}^n = \cup_{j\ge 0} B_n^j$, where $B_n^0 = B_n(0,R)$ and $B_j$ ($j\ge 1$) is $x$-remote in the sense $r_j \le |x_j|/2$,
    \item for all $j \ge 1$, $2^{-10} |x_j| \le r_j \le 2^{-9} |x_j|$,
    \item $\sum_j \mathbf{1}_{B_j}(x) \le C_L$ for all $x\in \mathbb{R}^n$, where $C_L>0$ does not depend on $R$.
\end{enumerate}

\end{lemma}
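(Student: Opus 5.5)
The covering will be built dyadically in $|x|$, i.e.\ by a Whitney-type decomposition of $\mathbb{R}^n\setminus\{0\}$ into annuli. We take $B_n^0=B_n(0,R)$; it remains to cover $\{|x|\ge R\}$, and in fact we cover all of $\mathbb{R}^n\setminus\{0\}$, so that $R$ plays no role in the multiplicity. For $k\in\mathbb{Z}$ let $A_k=\{x: 2^k\le |x|<2^{k+1}\}$. In each $A_k$ choose a maximal $2^{k-10}$-separated set $\{x_{k,i}\}_i$, and put $r_{k,i}=2^{k-10}$. Each such ball is centred in $A_k$, so
\[
2^{-11}|x_{k,i}|< r_{k,i}\le 2^{-10}|x_{k,i}|.
\]
To obtain exactly the normalisation in (2), we instead take the radius to be $2^{-10}|x_{k,i}|$; this lies in $[2^{k-10},2^{k-9})$ and so still satisfies $2^{-10}|x_j|\le r_j\le 2^{-9}|x_j|$. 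By maximality, the balls of radius $2^{k-10}$, and hence the enlarged ones, cover $A_k$. We keep only the indices with $A_k\not\subset B_n(0,R)$. Since $r_j\le 2^{-9}|x_j|\le |x_j|/2$, properties (1) and (2) then hold after relabelling, and the family is countable because each annulus carries finitely many centres.

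The real content is (3), which we get by a volume-packing argument with constants uniform in $k$ and $R$. Fix $x\ne0$ and suppose $x\in B(x_j,r_j)$ with $x_j\in A_k$. Then
\[
|x-x_j|\le 2^{-9}|x_j|,
\]
so $|x|\sim|x_j|$ with constants close to $1$. This forces $k$ to lie in a set of at most $3$ consecutive integers determined by $|x|$. For a fixed such $k$, the centres $x_{k,i}$ with $x\in B(x_{k,i},r_{k,i})$ lie in $B_n(x,2^{k-8})$ and are $2^{k-10}$-separated. The disjoint balls $B_n(x_{k,i},2^{k-11})$ all sit inside $B_n(x,2^{k-7})$, so comparing Lebesgue volumes bounds their number by $(2^{4})^n=16^n$. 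Summing over the admissible $k$ gives $C_L\le 3\cdot16^n$. The ball $B_n^0$ adds at most $1$, so $C_L=3\cdot16^n+1$, independent of $R$.

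The only point needing care is the bookkeeping in (3): we must check that the dilation bounds depend only on the ratios $2^{-9},2^{-10}$ and not on the scale $2^k$, which holds by homogeneity. The main obstacle is ensuring that enlarging radii from $2^{k-10}$ to $2^{-10}|x_j|$ does not spoil the separation-based count; it does not, since the enlargement factor is at most $2$.
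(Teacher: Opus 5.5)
Your proof is correct and is essentially the paper's argument. Both use a Whitney-type decomposition into dyadic annuli and a maximal separated family in each annulus (your $2^{k-10}$-net plays the role of the paper's Vitali selection of disjoint balls of radius $R2^{N-13}$), followed by a Lebesgue-volume packing count over the at most three annuli a given point can meet; your use of absolute scales $2^k$ and radii $2^{-10}|x_j|$, instead of the paper's $R$-scaled annuli with constant radius $R2^{N-10}$, is only cosmetic, though it makes the $R$-independence of $C_L$ especially transparent.

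As a side remark beyond the lemma as stated: the paper's selection already covers each annulus by balls whose radius is a fixed fraction of the final radius, and this is what makes the later partition of unity with $r_j\|\nabla_x\mathcal{X}_j\|_\infty\le C$ available. Your net only guarantees covering at radius $2^{k-10}$, which may coincide with $r_j$, so for that application you would want to take the net at a finer scale such as $2^{k-12}$.
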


\begin{proof}[Proof of Lemma~\ref{le_remoteball}]
Set $B_n^0 = B_n(0,R)$ and $A_N:= B_n(0, R 2^{N}) \setminus B_n(0, R 2^{N-1})$ for each $N\ge 1$. Apparently, 
\begin{align*}
    \mathbb{R}^n = B_n^0 \cup \bigcup_{N\ge 1}A_N,\quad A_N \subset \bigcup_{x\in A_N} B_n\big(x, R 2^{N-13}\big).
\end{align*}
It follows by Vitali's covering lemma, that one can find an index set $I_N$ and a collection of balls $\{B_n\big(x_{N,j},R2^{N-13}\big)\}_{j\in I_N}$ with $x_{N,j}\in A_N$, pairwise disjoint and $A_N \subset \bigcup_{j\in I_N}B_n\big(x_{N,j}, R2^{N-10}\big)$. One deduces the finiteness of $\# I_N$, 
\begin{align*}
    \# I_N |B_n(0,R 2^{N+2})| &\le \sum_{j \in I_N} |B_n\big(x_{N,j}, R2^{N+3}\big)|\\
    &\le  2^{16n} \sum_{j \in I_N} |B_n(x_{N,j}, R2^{N-13})| \\
    &\le 2^{16n}|B_n \big(0,R2^{N+2}\big)|,
\end{align*}
Note that by setting $B_n^{j} = B_n(x_{j}, r_j)$ with $x_j = x_{N,j}$, $r_j = R 2^{N-10}$ and relabeling, we construct a sequence of balls $\{B_n^j\}_j$ such that $\cup_{j \ge 0}B_n^j = \mathbb{R}^n$. Moreover, we have for $j \ne 0$ (since $x_j \in A_N$ for some $N$),
\begin{align*}
2^{-10}|x_j|\le r_j \le 2^{-9} |x_j|,
\end{align*}
and for all $x\in B_n^j$,
\begin{align*}
    2^8 r_j \le |x| \le 2^{11}r_j.
\end{align*}
Clearly, this construction guarantees that $B_n^0 = B_n(0,R)$ is the only ball in the collection which contains $0$. In addition, for $x\ne 0$, one sets $J_x = \{j; x\in B_n^j\}$. Observe that if $x\in A_N$ for some $N\ge 1$ (the case $x\in B_n^0$ is similar), and $x\in B_n^j$ for some $j$, then $j$ is either in $I_N$, $I_{N-1}$ or $I_{N+1}$. Therefore, if $x\in A_N$, we have
\begin{align*}
    \# J_x &\big|B_n\big(x,2^{-1}|x| \big)\big| \le \sum_{j \in J_x} \big|B_n \big(x_j, 2^{-1}|x|+2^{-8}|x|\big)\big| \le 2^{20n} \sum_{j\in J_x} \big|B_n \big(x_j, 2^{-20}|x|\big) \big|\\
    &\le 2^{20n} \sum_{j\in J_x} \big|B_n\big(x_j, 2^{-3}r_j\big)\big| = 2^{20n} \sum_{l\in \{-1,0,1\}} \sum_{j \in J_x \cap I_{N+l}} \big|B_n\big(x_j, 2^{-3}r_j\big)\big| \\
    &=2^{20n} \sum_{l\in \{-1,0,1\}} \left|\cup_{j \in J_x\cap I_{N+l}}B_n\big(x_j, 2^{-3} r_j\big)\right|\\
    &\le 3 \cdot 2^{20n} \big|B_n\big(x, 2^{-1}|x|\big)\big|,
\end{align*}
since for all $j \in J_x\cap I_{N+l}$, one has $$B_n\big(x_j, 2^{-3} r_j\big) \subset B_n\big(x,9 \cdot 2^{-3} r_j\big) \subset B_n \big(x, 9\cdot 2^{-11} |x| \big) \subset B_n\big(x, 2^{-1}|x|\big).$$ 
The case $x\in B_n^0$ is similar. Hence, we conclude that there exists a constant $C_L>0$ which does not depend on $R,N$ such that
\begin{align*}
    \sum_j \mathbf{1}_{B_j}(x) \le C_L,\quad \forall x\in \mathbb{R}^n,
\end{align*}
completing the proof.
\end{proof}

We are now in a position to prove Theorem~\ref{thm_ISO}

\begin{proof}[Proof of Theorem~\ref{thm_ISO}]

Let $\lambda>0$, $q=\frac{\mathcal{Q}}{\mathcal{Q}-1}$ and $f\in C_c^\infty(\mathbb{R}^{n+m})$ be fixed. Set $R = \left(\frac{\|f\|_q}{\lambda}\right)^{\frac{q}{\mathcal{Q}(1-\alpha)}}$. By Lemma~\ref{le_remoteball} with parameter $R$, there exists a sequence of balls $\{B_n^{j}=B_n(x_j,r_j)\}_{j \ge 0}$ such that
\begin{align*}
    \mathbb{R}^n = B_n^0 \cup \left(\cup_{j \ge 1}B_n^{j} \right).
\end{align*}
Moreover, each $B_n^{j}$ ($j \ne0$) is $x$-remote and $B_n^0 = B_n(0,R)$. Let $\{\mathcal{X}_\alpha\}$ be a smooth partition of unity subordinated to $B_n^{j}$ such that $0\le \mathcal{X}_j \le 1$ and
\begin{enumerate}
    \item $\sum_{j\ge 1} \mathcal{X}_{j} + \mathcal{X}_{0} = 1$,
    \item $\|\mathcal{X}_j\|_\infty +  r_j \|\nabla_x \mathcal{X}_j\|_\infty \le C$ for all $j\ge 1$,
    \item $\textrm{supp}(\mathcal{X}_j) \subset B_n^{j}$ for all $j\ge 0$.
\end{enumerate}
Next, we decompose
\begin{equation*}
    f(x,y) = \sum_{j\ge 1} f(x,y) \mathcal{X}_j(x) + f(x,y)\mathcal{X}_{0}(x):= \sum_{j \ge 1} f_j + f_{0}.
\end{equation*}
Since the covering has finite overlap property (and the finite overlap constant does not depend on $R$), one infers
\begin{align}\label{eqlast}
    \left| \{\mathbb{R}^{n+m}; |f|\ge \lambda\} \right| \le \sum_{j\ge 0} \mathcal{F}_j,
\end{align}
where
\begin{equation*}
    \mathcal{F}_j = \left| \{B_n^{j} \times \mathbb{R}^m; |f_j|\ge C_L^{-1}\lambda\} \right|,\quad \forall j\ge 0.
\end{equation*}

Now, for each $j \ge 1$, we further split (for some $t>0$)
\begin{equation*}
    f_j = \left( f_j - e^{-tL}f_j \right) + e^{-tL}f_j.
\end{equation*}
Note that by \cite[Prop~3.1]{DS2}, we have $\|e^{-tL}\|_{1\to \infty}\lesssim t^{-\mathcal{Q}/2}$. It follows by interpolation that $\|e^{-tL}f_j\|_\infty \le C t^{-\frac{\mathcal{Q}}{2q}} \|f\|_q$. Therefore, by setting $t = \left(2C_L C \|f\|_q/\lambda\right)^{2q/\mathcal{Q}}$, we deduce that
$$
\left|\{|e^{-tL}f_j|\ge (2C_L)^{-1}\lambda\}\right|= 0.
$$
Thus we obtain for $j\ge 1$
\begin{align}\label{eq_thm_main1}
    \mathcal{F}_j \le \left|\left\{B_n^{j} \times \mathbb{R}^m; |f_j - e^{-tL}f_j| \ge (2C_L)^{-1}\lambda  \right\}\right|.
\end{align}

To estimate \eqref{eq_thm_main1}, we write $f_j - e^{tL}f_j = \int_0^t L e^{-sL}f_j ds$. With Minkowski's integral inequality, $\mathcal{F}_j$ is bounded by
\begin{align}\label{eq_thm_main2}
    2C_L\lambda^{-1} \int_0^t \|L e^{-sL} f_j \|_{L^1(B_n^{j} \times \mathbb{R}^m)} ds.
\end{align}
Let $g\in L^\infty$ supported in $B_n^{j} \times \mathbb{R}^m$ with $\|g\|_{\infty} \le 1$. By integration by parts and Hölder's inequality, it is clear that
\begin{align}\label{eq_thm_main3}
    \iint L e^{-sL} f_j(x,y) g(x,y) dx dy &= \int_{\mathbb{R}^m}\int_{B_n^{j}} \nabla_L f_j(x,y) \cdot \nabla_L e^{-sL}g(x,y) dx dy\\ \nonumber
    &\le \left\| \nabla_L f_j \right\|_{L^1(\mathbb{R}^{n+m})} \left\| \nabla_L e^{-tL}g\right\|_{L^\infty(B_n^{j}\times \mathbb{R}^m)}.
\end{align}

\medskip

\textit{Claim.}
\begin{align}\label{claim}
    \sup_{j \ge 1} \sup_{s>0} \|\sqrt{s} \nabla_L e^{-sL}g \|_{L^\infty(B_n^{j}\times \mathbb{R}^m)} \le  C\|g\|_\infty
\end{align}
for all $g\in L^\infty$ supported in $B_n^{j} \times \mathbb{R}^m$, where $C$ depends only on $n,m,\alpha,\beta,C_d,C_D$.

\begin{proof}[Proof of \textit{Claim.}]

Let $\xi = (x,y)\in B_n^j \times\mathbb{R}^m$, $\eta=(x',y') \in B_n^j \times\mathbb{R}^m$ and $s>0$. Note that by Lemma~\ref{le_remoteball}, one has $2^8 r_j \le |x| \le 2^{11} r_j$. Then Lemma~\ref{le_Grushin_gradient} implies that
\begin{align*}
    \left|\nabla_L e^{-sL}g(x,y)\right| &\le \int_{\mathbb{R}^m} \int_{B_n^j} \left(\frac{1}{\sqrt{s}} + \frac{1}{r_j^{1-\alpha}}\right) \frac{C}{V(\xi,\sqrt{s})} e^{-\frac{d(\xi,\eta)^2}{cs}} |g(\eta)| d\eta.
\end{align*}

\textit{Case 1.} If $\sqrt{s}\le 2^{11(1-\alpha)}r_j^{1-\alpha}$, then a standard argument (c.f. \cite[Lemma 2.1]{CD2}) yields that
\begin{align*}
\left|\nabla_L e^{-sL}g(\xi)\right| &\le s^{-1/2} \|g\|_\infty \frac{C}{V(\xi,\sqrt{s})} \int_{\mathbb{R}^{n+m}} e^{-\frac{d(\xi,\eta)^2}{cs}}d\eta \le C s^{-1/2} \|g\|_\infty,
\end{align*}
where the constant $C>0$ depends only on $C_D$.

\textit{Case 2.} If $\sqrt{s}\ge 2^{11(1-\alpha)}r_j^{1-\alpha} \ge |x|^{1-\alpha}$, one deduces
\begin{align}\label{eq_le_gradient1}
\left|\nabla_L e^{-sL}g(\xi)\right| &\le r_j^{\alpha-1} \|g\|_\infty \frac{C}{V(\xi,\sqrt{s})} \int_{B_n^j} \int_{\mathbb{R}^m} e^{-\frac{d(\xi,\eta)^2}{cs}} d y' dx'.
\end{align}
By Lemma~\ref{le_geo}, one has volume estimate $V(\xi,\sqrt{s}) = V((x,y), \sqrt{s}) \sim s^{\mathcal{Q}/2}$. Also, the distance estimate gives
\begin{align*}
    d((x,y);(x',y'))^2 \sim \frac{|x-x'|^2}{\left( |x| + |x'| \right)^{2\alpha}} + \frac{|y-y'|^2}{(|x|+|x'|)^{2\beta} + |y-y'|^{\frac{2\beta}{\beta+1-\alpha}}}.
\end{align*}
Transferring to polar coordinates system (set $\sigma = (|x|+|x'|)^{2\beta}$), the inner integral of \eqref{eq_le_gradient1} is bounded by (up to a constant depending on $C_d$)
\begin{align*}
\int_0^\infty &\textrm{exp}\left(\frac{-cr^2 s^{-1}}{\sigma + r^{2\beta/(\beta+1-\alpha)} } \right) r^{m-1} dr \\
&\le \int_0^{\sigma^{\frac{\beta+1-\alpha}{2\beta}}} \textrm{exp}\left( - \frac{r^2}{2s\sigma} \right) r^{m-1} dr + \int_{\sigma^{\frac{\beta+1-\alpha}{2\beta}}}^\infty \textrm{exp}\left( -\frac{r^{\frac{2-2\alpha}{\beta+1-\alpha}}}{2s} \right) r^{m-1} dr\\
&\le C\max \left(s^{\frac{m}{2}} \sigma^{\frac{m}{2}}, s^{\frac{m(\beta+1-\alpha)}{2(1-\alpha)}} \right) \le C s^{\frac{m(\beta+1-\alpha)}{2(1-\alpha)}},
\end{align*}
where the last inequality follows by observing that $\sigma \sim |x|^{2\beta} \sim r_j^{2\beta} \le C s^{\frac{\beta}{1-\alpha}}$. 

Combining this with \eqref{eq_le_gradient1}, one infers
\begin{align*}
\left|\nabla_L e^{-sL}g(\xi)\right| &\le C\|g\|_\infty r_j^{\alpha-1} s^{-\frac{\mathcal{Q}}{2}} |B_n^j| s^{\frac{m(\beta+1-\alpha)}{2(1-\alpha)}}\\
&\le C \|g\|_\infty r_j^{n+\alpha-1} s^{-\frac{n}{2(1-\alpha)}}\\
&\le Cs^{\frac{n-(1-\alpha)}{2(1-\alpha)} - \frac{n}{2(1-\alpha)}}  \|g\|_\infty  \\
&= C s^{-1/2}\|g\|_\infty
\end{align*}
as desired. This proves the \textit{Claim.}
\end{proof}

\medskip

By the \textit{Claim} \eqref{claim}, \eqref{eq_thm_main2} and \eqref{eq_thm_main1}, one obtains for $j\ge 1$,
\begin{align*}
    \mathcal{F}_j \le C C_L\lambda^{-1} \|\nabla_L f_j\|_1 t^{\frac{1}{2}} &= CC_L \lambda^{-1} \|\nabla_L f_j\|_1 \left(\frac{\|f\|_q}{\lambda}\right)^{\frac{q}{\mathcal{Q}}} \\
    &= CC_L \lambda^{-1-q/\mathcal{Q}} \|\nabla_L f_j\|_1 \|f\|_q^{q/\mathcal{Q}}.
\end{align*}
Note that for $x\in B_n^j$
\begin{align}
    |\nabla_L f_j| &= \left|\left( |x|^\alpha \mathcal{X}_j \nabla_xf  + |x|^\alpha f \nabla \mathcal{X}_j, |x|^\beta \mathcal{X}_j \nabla_yf \right)\right|\\ \nonumber&\le C \left( |\nabla_L f| + |x|^\alpha \frac{|f|}{r_j} \right)\\ \nonumber
    &\le C \left(|\nabla_L f| + \frac{|f|}{|x|^{1-\alpha}}\right).
\end{align}
It then follows by finite overlap property and Lemma~\ref{le_hardy} that
\begin{align*}
\sum_{j \ge 1} \mathcal{F}_j &\le CC_L \lambda^{-1-q/\mathcal{Q}} \|f\|_q^{q/\mathcal{Q}} \left( \sum_{j \ge 1} \|\nabla_L f\|_{L^1(B_n^j \times \mathbb{R}^m)} +  \sum_{j \ge 1} \left\|\frac{f}{|x|^{1-\alpha}}\right\|_{L^1(B_n^j \times \mathbb{R}^m)} \right)\\
&\le CC_L \lambda^{-1-q/\mathcal{Q}} \|f\|_q^{q/\mathcal{Q}} \left( \|\nabla_L f\|_{L^1(\mathbb{R}^{n+m})} + \int_{\mathbb{R}^m} \sum_{j \ge 1} \int_{B_n^j} \frac{|f(x,y)|}{|x|^{1-\alpha}} dx dy \right)\\
&\le CC_L(1+C_H) \lambda^{-1-q/\mathcal{Q}} \|f\|_q^{q/\mathcal{Q}}\|\nabla_L f\|_1.
\end{align*}

To this end, one also needs to treat $\mathcal{F}_0$. By Lemma~\ref{le_hardy}, it is plain that
\begin{align*}
    \mathcal{F}_0 &= \left|\{B_{0}\times \mathbb{R}^m; |f_{0}|\ge C_L^{-1}\lambda\}\right|\\
    &\le C_L \lambda^{-1} \| f \|_{L^1\left(B_n(0,R)\times \mathbb{R}^m\right)} \le C_L \lambda^{-1} R^{1-\alpha} \left\| 
\frac{f}{|x|^{1-\alpha}} \right\|_{L^1(\mathbb{R}^n \times \mathbb{R}^m)}\\
&\le C_LC_H \lambda^{-1-q/\mathcal{Q}} \|f\|_q^{q/\mathcal{Q}}\|\nabla_L f\|_1.
\end{align*}
Consequently, one concludes by \eqref{eqlast} that
\begin{align*}
    \sup_{\lambda>0} \lambda^{1+q/\mathcal{Q}} \left|\left\{ (x,y)\in \mathbb{R}^{n+m}; |f(x,y)|\ge \lambda \right\}\right| \le C \|\nabla_L f\|_1 \|f\|_q^{q/\mathcal{Q}},
\end{align*}
where the constant $C$ only depends on $n,m,\alpha,\beta,C_d,C_L,C_H,C_D$. 

The proof of Theorem~\ref{thm_ISO} now follows from Lemma~\ref{le_BCLS}.
\end{proof}

\bigskip



\bibliographystyle{abbrv}

\bibliography{references.bib}


\end{document}